\newtheorem{thm}{Theorem}[section]
\newtheorem{prop}[thm]{Proposition}
\newtheorem{defn}[thm]{Definition}
\newtheorem{expl}[thm]{Example}
\newtheorem{rmq}[thm]{Remark}
\numberwithin{equation}{section}
\newcommand{\lra}{\longrightarrow}
\newcommand{\as}{\mathcal{A}s}
\newcommand{\mcalo}{\mathcal{O}}
\newcommand{\opt}{\mathcal{O}^{\bullet}}
\newcommand{\dtwo}{\mathcal{D}_2}
\newcommand{\ims}{\mathcal{I}}
\newcommand{\crf}{\left\langle f \right\rangle}
\newcommand{\fitilde}{\widetilde{\varphi}}
\title{ \textbf{McClure-Smith cosimplicial machinery and  the cacti operad}}
\date{}
\author{ Paul Arnaud Songhafouo Tsopm\'en\'e}
\begin{document}
\maketitle

\begin{abstract}
McClure and Smith constructed a functor that sends a topological multiplicative operad $\mcalo$ to an $E_2$ algebra $\mbox{Tot} \opt$. They define in fact an  operad $\dtwo$ (acting on the totalization $\mbox{Tot} \opt$) weakly equivalent to the little $2$-disks operad. On the other hand, Salvatore showed that  $\dtwo$ is isomorphic to  the cacti operad $MS$, which has a nice geometric description. He also built a geometric action of  $MS$  on $\mbox{Tot} \opt$. In this paper we detail  the McClure-Smith action and the cacti action. Our main result says that they are  compatible in the sense that some squares must commute. 
\end{abstract}

\section{Introduction}

A \textit{multiplicative operad} is a topological nonsymmetric operad $\mcalo$ endowed with a morphism $\as \lra \mcalo$ from the associative operad $\as=\{*\}_{n \geq 0}$ to $\mcalo$. To any multipicative operad, McClure and Smith \cite[Section 10]{mcc_smith04} associate a cosimplicial space $\opt$. They also define an $E_2$ operad $\dtwo$ (see Definition~\ref{dtwo_definition}), which acts on the totalization $\mbox{Tot} \opt$. This gives a functor from the category of multiplicative operads to the category of $E_2$-algebras (for us, an \textit{$E_2$-algebra} is a topological space endowed with an action of an operad weakly equivalent to the little $2$-disks operad). 

On the other hand,  Salvatore \cite[Proposition 8.2]{sal10} shows that the cacti operad $MS$ (see Definition~\ref{cacti_operad_defn}), which has a nice geometric description, is isomorphic to $\dtwo$. He also proves \cite[Theorem 5.4]{sal10} that $MS$ acts on the totalization $\mbox{Tot} \opt$. The idea of his proof is as follows. To any cactus, elements $a_1^{\bullet}, \cdots, a_n^{\bullet} \in \mbox{Tot} \opt$, and $t \in \Delta^k$ , he associates a planar tree whose vertices (except the root and the leaves) are labelled by the entries of $\mcalo$. Using now the operad structure of $\mcalo$, he gets an operation $\theta \in \mcalo^k$. 

We will  explicitly construct $\theta$, without using trees, and our proof will be more combinatorial. In fact, from the same data as those of Salvatore, we  first associate a word (instead of a labelled tree) on the alphabet $\overline{n}=\{1, \cdots, n\}$. Next, by "suitable cutting" this word, we  obtain an explicit formula for $\theta$ (many illustrative examples are given). 

The natural question one can ask is whether the McClure-Smith action and the cacti action are equivalent. The following theorem, which is the main result of this paper, gives a positive answer.

\begin{thm} \label{main_thm}
Let $\opt$ be cosimplicial space associate to a multiplicative operad. Then the McClure-Smith  and the cacti actions on the totalization $\emph{Tot} \opt$ are  equivalent. More precisely, there is a square
$$
\xymatrix{MS(n) \times (\emph{Tot} \opt)^n \ar[d] \ar[r] & \emph{Tot} \opt \ar[d] \\
                    \dtwo(n) \times (\emph{Tot} \opt)^n \ar[r] & \emph{Tot} \opt }
$$
that commutes for each $n \geq 0$.
\end{thm}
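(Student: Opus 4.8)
The plan is to reduce the commutativity of the square to an identity between two explicit recipes for producing an operation $\theta \in \mcalo^k$ out of the same combinatorial data: a cactus $c \in MS(n)$, elements $a_1^\bullet, \dots, a_n^\bullet \in \mathrm{Tot}\,\opt$, and a barycentric parameter $t \in \Delta^k$. On the left-hand side we have the cacti action, which—by the combinatorial reformulation developed earlier in the paper—builds $\theta$ by first reading off a word $w$ on the alphabet $\overline{n}$ determined by $(c,t)$, then ``suitably cutting'' $w$ into blocks and plugging the appropriate components $a_i^{k_i}$ of the $a_i^\bullet$ into $\mcalo$ according to that cut. On the right-hand side we have the McClure-Smith action, obtained by transporting the $\dtwo$-action along Salvatore's isomorphism $MS \xrightarrow{\cong} \dtwo$ of \cite[Proposition 8.2]{sal10}. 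So the square commutes if and only if, for every $n$, the McClure-Smith structure map $\dtwo(n) \times (\mathrm{Tot}\,\opt)^n \to \mathrm{Tot}\,\opt$, precomposed with $MS(n) \cong \dtwo(n)$, produces exactly the operation $\theta$ that the combinatorial cacti recipe produces.

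First I would fix, once and for all, an explicit description of the isomorphism $MS(n) \cong \dtwo(n)$: a point of $\dtwo(n)$ is a configuration-type datum in McClure-Smith's $E_2$ operad, and one must track precisely how the ``lobes'' of a cactus, their cyclic orders, and the position of the global base point correspond to the cells/coordinates of $\dtwo(n)$. Second, I would unwind the McClure-Smith action of $\dtwo(n)$ on $\mathrm{Tot}\,\opt$ evaluated on a simplex $\Delta^k$: this is governed by their cosimplicial machinery, so the action map sends $(D, a_1^\bullet, \dots, a_n^\bullet, t)$ to a point of $\opt^k$ assembled from coface/codegeneracy operations applied to the $a_i^\bullet$, with the combinatorics of $D$ dictating which faces/degeneracies and in which order. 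Third, I would compare this assembly with the word-and-cut description of $\theta$: the key is to show that the word $w$ read off from the cactus records exactly the same sequence of ``insertions'' that the McClure-Smith faces/degeneracies encode, and that the cut points correspond to the cosimplicial structure maps.

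The main obstacle I expect is bookkeeping the two different conventions against each other—cyclic orderings of lobes versus linear words, base-point placement versus the distinguished $0$th coordinate in the cosimplicial picture, and the direction in which McClure-Smith's diagonal/face operations act versus the direction in which the cut builds up an $\mcalo$-operation. Concretely, I expect the heart of the argument to be a lemma asserting that, for a generic point of $\dtwo(n)$ lying in the top cell, the McClure-Smith operation and the cut-of-the-word operation agree, proved by induction on $n$ (or on the number of lobes), with the base case $n = 1$ essentially the identity and the inductive step using the operadic composition $\circ_i$ of $\mcalo$ on one side and the compatibility of word-concatenation with cutting on the other. Having matched things on the top cell, one extends to all of $\dtwo(n) \cong MS(n)$ by continuity, since both sides are continuous in $t$ and in the cactus, and the top cell is dense. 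A final check handles the degenerate strata (collapsed lobes, coincident base points), where both recipes produce operations lying in the image of a codegeneracy, and these agree because the reductions are compatible with the identifications defining $\mathrm{Tot}$; once all $n$ are done, the square commutes, completing the proof.
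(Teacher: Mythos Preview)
Your outline is broadly correct in spirit---reduce the square to comparing two explicit recipes for an element of $\mcalo^k$---but it is more circuitous than the paper's argument and overestimates the bookkeeping difficulty. The paper's key simplification is that the isomorphism $q_n\colon MS(n)\to\dtwo(n)$ is \emph{defined} using exactly the same combinatorial data that the cacti action $\theta_n$ uses: given $(x,\fitilde)\in MS(n)$ and $t\in\Delta^k$, one forms the same diagram $[k]\xleftarrow{h}[m]\xrightarrow{f}\overline{n}$ and the same tuple $(y_1,\dots,y_n)\in\prod_i\Delta^{|f^{-1}(i)|-1}$, and sets $q_n(x,\fitilde)_k(t)=[(y_1,\dots,y_n)]$ in the colimit defining $\Xi^2_n(\Delta^\bullet,\dots,\Delta^\bullet)([k])$. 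Thus there is no mismatch of ``cyclic versus linear'' conventions to reconcile; both sides already live over the identical word $f$.

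With $q$ set up this way, the commutativity of the square reduces pointwise to the algebraic identity
\[
h_*\circ\crf\bigl(a_1^{f^{-1}(1)}(y_1),\dots,a_n^{f^{-1}(n)}(y_n)\bigr)=\theta_n(f),
\]
and the paper checks this directly for the two generating shapes of $f$ (the word $1\cdots12\cdots2$ and the word $1\,2\cdots2\,1\,3\cdots3\,1\cdots1\,(n{+}1)\cdots(n{+}1)\,1$), since McClure--Smith's $\crf$ is built inductively from exactly these two cases and the cacti formulas $\theta_*$ are built by the matching induction. No density-and-continuity argument is needed: the equality holds on the nose at every $t$, including the degenerate strata (which are absorbed into the ambiguity-of-$f$ discussion already handled in the construction of $\theta_n$). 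Your plan would work, but you would end up reproving these identities anyway once you unwound the top-cell formula, so the direct pointwise check is both shorter and cleaner.
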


\textbf{Outline of the paper.}

\begin{enumerate}
\item[-] In Section~\ref{basic_section} we recall the notions of cosimplicial spaces and totalizations. We also recall the  notion of multiplicative operads. 
 \item[-] In Section~\ref{cacti_section} we recall the cacti operad $MS$, and we construct an explicit action  on $\mbox{Tot} \opt$. 
 \item[-] In Section~\ref{dtwo_section} we first recall the McClure-Smith operad $\dtwo$. Then we explicit the details of its action on $\mbox{Tot} \opt$.
 \item[-] In Section~\ref{main_result_section} we prove Theorem~\ref{main_thm}.
\end{enumerate}

\section{Cosimplicial spaces, totalizations, and multiplicative operads} \label{basic_section}

This section recalls some basic notions. We also recall the fundamental construction $\mcalo \rightsquigarrow \opt$ due to McClure and Smith in \cite{mcc_smith04}.

Let us start with the notion of \textit{cosimplicial spaces} and \textit{totalizations}. Let $\Delta$ be the category whose objects are ordered sets on the form $[k]=\{0, \cdots, k\}$, $k \geq 0$, and morphisms are non decreasing maps. A \textit{cosimplicial space} is a covariant functor $\opt \colon \Delta \lra \mbox{Top}$ from $\Delta$ to topological spaces. One can define a cosimplicial space as a sequence $\opt=\{\mcalo^k\}_{k \geq 0}$ of topological spaces equipped with maps $d^i \colon \mcalo^k \lra \mcalo^{k+1}$ (called \textit{cofaces}) and $s^j \colon \mcalo^{k+1} \lra \mcalo^k$ (called \textit{codegeneracies}) satisfying some identities (known as \textit{cosimplicial relations}). One of the simplest examples of a cosimplicial space is the \textit{standard cosimplicial space} $\Delta^{\bullet}= \{\Delta^k\}_{k \geq 0}$. The space $\Delta^k$ is the standard geometric $k$-simplex. Throughout this paper it will be defined by
 $$
\Delta^k=\left\{
     \begin{array}{ll}
     * & \mbox{if}\;\; k=0 \\
     \{(t_1,\cdots,t_k) \in [-1,1]^k: -1 \leq t_1 \leq \cdots \leq t_k \leq 1\} & \mbox{if}\;\; k \geq 1. 
     \end{array}
\right.     
$$
The maps $d^i\colon \Delta^k \longrightarrow \Delta^{k+1}$ and $s^j\colon \Delta^{k+1} \longrightarrow \Delta^k$  are defined as follows. 
\begin{enumerate}
 \item[$\bullet$] For $0 \leq i \leq k+1$, for  $t=(t_1, \cdots, t_k) \in \Delta^k$, 
$$
  d^i(t)= \left\{
                                    \begin{array}{lll}
                                      (-1, t_1, \cdots, t_k) & \mbox{if  $i=0$}\\
                                   (t_1, \cdots,t_i, t_i, \cdots, t_k)& \mbox{if  $1 \leq i \leq k$ }\\
                                   (t_1, \cdots, t_k, 1) & \mbox{if  $i=k+1$}. \end{array}
                                     \right. $$
 \item[$\bullet$] For $1 \leq j \leq k+1, t=(t_1, \cdots, t_{k+1}) \in \Delta^{k+1}$,
$$
  s^j(t)=(t_1, \cdots, t_{j-1}, t_{j+1},  \cdots, t_{k+1}). 
$$
 \end{enumerate} 

\begin{defn}
The \emph{totalization} of a cosimplicial space $\opt$, denoted by $\emph{Tot} \opt$, is the space of natural transformations from the standard cosimplicial space $\Delta^{\bullet}$ to $\opt$. That is, 
$$\emph{Tot} \opt := \emph{Nat}(\Delta^{\bullet}, \opt).$$
\end{defn}

Let  us define now the important notion of \textit{multiplicative operads}.  Roughly speaking, an element of an operad is an operation with many inputs and one output. To be more precise, we have the following definition.

\begin{defn}  An \emph{operad} is a collection $\mcalo=\{\mcalo(k)\}_{k \geq 0}$ of topological spaces together with an unit $id \in \mcalo(1)$ and insertion maps
$$\circ_i \colon \mcalo(p) \times \mcalo(q) \lra \mcalo(p+q-1), 1 \leq i \leq p,$$
such that for $x \in \mcalo(p)$, $y \in \mcalo(q)$, $z \in \mcalo(k)$,
$$
\begin{array}{lll}
(x \circ_i y) \circ_{j+q-1} z & = & (x \circ_j z) \circ_i y \ \ \mbox{for $1 \leq i < j \leq p$} \\
x \circ_i (y \circ_j z) & = & (x \circ_i y) \circ_{i+j-1} z \ \ \mbox{for $1 \leq i \leq p$ and $1 \leq j \leq q$} \\
x \circ_i id & = & id \circ_1 x = x.
\end{array}
$$ 

\end{defn}

As said in the previous definition,  all our operads are topological and nonsymmetric. The simplest example of operad is the \textit{associative operad} $\as=\{\as(k)\}_{k \geq 0}$.  Recall that  each $\as(k)$ is the one point space $*$. In other words, $\as=\{*\}_{k \geq 0}$.

\begin{defn}
 A \emph{multiplicative operad} is a topological nonsymmetric  operad $\mcalo$ equipped with a map $\as \lra \mcalo$ of nonsymmetric operads from the associative operad $\as$ to $\mcalo$.
\end{defn}

The following remark gives an equivalent definition of a multiplicative operad. 

\begin{rmq} \label{multoperads-cosimplicialobjects2}
 A multiplicative structure on an operad $\mathcal{O}$ is equivalent to having special operations $e \in \mathcal{O}(0)$ and $\mu \in \mathcal{O}(2)$ satisfying 
$$\mu \circ_1 \mu=\mu \circ_2 \mu\ \ \ \mbox{and}\ \ \ \mu \circ_1 e=\mu \circ_2 e = id.$$ 
\end{rmq}

\begin{prop} \emph{\cite[Section 10]{mcc_smith04}}  \label{multoperads-cosimplicialobjects} 
To any multiplicative operad $\mcalo$, one can associate a cosimplicial space $\opt$. 
\end{prop}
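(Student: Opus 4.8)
The plan is to construct explicitly the cosimplicial space $\opt$ from the data of a multiplicative operad $\mcalo$, using the reformulation given in Remark~\ref{multoperads-cosimplicialobjects2}, and then verify the cosimplicial relations. First I would set $\mcalo^k := \mcalo(k)$ for each $k \geq 0$. The multiplicative structure gives us two distinguished operations, $e \in \mcalo(0)$ and $\mu \in \mcalo(2)$, satisfying the associativity identity $\mu \circ_1 \mu = \mu \circ_2 \mu$ and the unit identities $\mu \circ_1 e = \mu \circ_2 e = id$. I would then define the cofaces $d^i \colon \mcalo(k) \lra \mcalo(k+1)$ for $0 \leq i \leq k+1$ by
$$
d^i(x) = \left\{
\begin{array}{ll}
\mu \circ_2 x & \mbox{if } i = 0,\\
x \circ_i \mu & \mbox{if } 1 \leq i \leq k,\\
\mu \circ_1 x & \mbox{if } i = k+1,
\end{array}
\right.
$$
and the codegeneracies $s^j \colon \mcalo(k+1) \lra \mcalo(k)$ for $1 \leq j \leq k+1$ by $s^j(x) = x \circ_j e$. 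Intuitively, $d^0$ and $d^{k+1}$ use $\mu$ to add a "free" input on the left or right, the inner cofaces duplicate an input by plugging $\mu$ into slot $i$, and the codegeneracies delete an input by plugging in the nullary operation $e$.

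The substantive part of the proof is then checking that these maps satisfy the seven cosimplicial relations: $d^j d^i = d^i d^{j-1}$ for $i < j$; $s^j s^i = s^i s^{j+1}$ for $i \leq j$; and the three mixed relations $s^j d^i = d^i s^{j-1}$ for $i < j$, $s^j d^i = id$ for $i = j$ or $i = j+1$, and $s^j d^i = d^{i-1} s^j$ for $i > j+1$. Each of these reduces to an identity among iterated $\circ_i$ operations, which one verifies by repeatedly applying the two associativity axioms of an operad (the "parallel" and "sequential" composition laws) together with the three identities satisfied by $e$ and $\mu$. I would organize the verification by the position of the indices relative to one another — for instance, when checking $d^j d^i = d^i d^{j-1}$ one splits into cases according to whether $i, j$ are extremal ($0$ or top) or interior, and in the all-interior case the computation is exactly the first operad associativity axiom $(x \circ_i \mu) \circ_{j+1} \mu = (x \circ_{j} \mu) \circ_i \mu$ after reindexing.

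The main obstacle — really just bookkeeping rather than a genuine difficulty — is the degenerate cases where one or both coface indices are extremal, since there the relevant identity mixes the "external" multiplication by $\mu$ with "internal" insertions, and one must invoke the associativity $\mu \circ_1 \mu = \mu \circ_2 \mu$ at the right moment. For example, to see $d^{k+2} d^0 = d^0 d^{k+1}$ on $\mcalo(k)$ one computes $d^{k+2}(\mu \circ_2 x) = \mu \circ_1 (\mu \circ_2 x)$ and $d^0(\mu \circ_1 x) = \mu \circ_2 (\mu \circ_1 x)$, and these agree by the sequential associativity axiom together with $\mu \circ_1 \mu = \mu \circ_2 \mu$. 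Similarly the relations $s^i d^i = id = s^{i+1} d^i$ come down precisely to the unit identities $\mu \circ_1 e = \mu \circ_2 e = id$. Once all cases are dispatched, the collection $\{\mcalo(k)\}_{k \geq 0}$ with these cofaces and codegeneracies is a cosimplicial space, which is the desired $\opt$.
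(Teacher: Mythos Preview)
Your proposal is correct and follows exactly the same approach as the paper: you give the same definitions of $\mcalo^k$, $d^i$, and $s^j$, and the paper then simply asserts that the cosimplicial relations are ``straightforward to check'' without writing out the case analysis you sketch. If anything, your write-up is more detailed than the paper's own proof.
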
 

\begin{proof}   Let $\mcalo=\{\mcalo(k)\}_{k \geq 0}$ be a multiplicative operad, and let  $e \in \mcalo(0)$ and $\mu \in \mcalo(2)$ as in Remark~\ref{multoperads-cosimplicialobjects2}.  Define $\mathcal{O}^k=\mathcal{O}(k)$. Define also the cofaces morphisms $d^i\colon \mathcal{O}^k \longrightarrow \mathcal{O}^{k+1}$ and the codegeneracies morphims $s^j \colon \mathcal{O}^{k+1} \longrightarrow \mathcal{O}^k$ by the following formulas.
 \begin{enumerate}
 \item[$\bullet$] For $0 \leq i \leq k+1$, $x \in \mcalo^k$, define 
 $$ d^i(x)= \left\{
                                    \begin{array}{lll}
                                      \mu \circ_2 x & \mbox{if} & i=0 \\
                                  x \circ_i \mu & \mbox{if} &  1 \leq i \leq k \\
                                   \mu \circ_1 x & \mbox{if} &  i=k+1. \end{array}
                                     \right. $$
 \item[$\bullet$] For $1 \leq j \leq k+1$, $y \in \mcalo^{k+1}$, define  $s^j(y)= y \circ_j e$
\end{enumerate} 
It is straightforward to check cosimplicial relations  with $d^i$ and $s^j$ thus defined. 
\end{proof}

In the rest of this paper $\mcalo$ is a multiplicative operad, and $\opt$ is the associated cosimplicial space (as in Proposition~\ref{multoperads-cosimplicialobjects}).

\section{The cacti operad $MS$ and its action on $\mbox{Tot} \opt$} \label{cacti_section}

In this section we define the cacti operad $MS$ (see Definition~\ref{cacti_operad_defn}) and show that it explicitly acts  on $\mbox{Tot} \opt$ (Theorem~\ref{action_cacti_operad}).  In all this paper, for $n \geq 0$, the set $\overline{n}$ is defined by $\overline{n}=\{1, \cdots, n\}$.

We begin with the definition of the cacti operad $MS$. Let $S^1$ be the unit circle viewed as the quotient of the interval $I=[-1, 1]$ by the relation $-1 \sim 1$. Let $\pi \colon [-1, 1] \lra S^1$ denote  the canonical surjection, and let $*=\pi(1)$ denote the base point of $S^1$. To define $MS$ we need to define first a family of topological spaces 
$$\ims=\{\ims_k(n): n \geq 0 \ \mbox{and} \ 0 \leq k \leq \infty\}.$$ 
Let $n \geq 1$ be an integer. We start by defining the space $\ims_{\infty}(n)$. Next we will define $\ims_k(n)$ as a subspace of $\ims_{\infty}(n)$.\\
Let $K=\{K_i=[x_i^K,x_{i+1}^K]\}_{i=0}^{p_K-1}$ be a family of closed subintervals of $I$ satisfying the following two conditions:
\begin{itemize}
\item[$(P_1)  \colon $] $p_K \geq n$, $x_0^K=-1$, $x_p^K=1$ and the points $x_0^K, \cdots, x_p^K$ are pairwise distinct.
\item[$(P_2) \colon $] The intervals $K_i$ define $n$ $1$-manifolds $I_1^K, \cdots, I_n^K$ of disjoint interiors and with equal length. This means that each $I_j^K$ is an union of some $K_i$.
\end{itemize} 
We denote by $\mathcal{P}_n$ the collection of such a family $K$. That is,
\begin{eqnarray}
\mathcal{P}_n=\{K=\{K_i=[x_i^K,x_{i+1}^K]\}_{i=0}^{p_K-1} | \  K\ \mbox{satisfies} \ (P_1) \ \mbox{and} \ (P_2) \}.
\end{eqnarray}
The image of a $n$-tuple $(I_1^K, \cdots, I_n^K )$ (respectively the image of intervals $K_i$) under the canonical surjection $\pi$ will be denoted again by $(I_1^K, \cdots, I_n^K )$ (respectively by $\{K_i\}$). The set $\ims_{\infty}(n)$ is then defined by
$$\ims_{\infty}(n)=\{(I_1^K, \cdots, I_n^K)|\ K \in \mathcal{P}_n\}.$$
From now and in the rest of this paper, we will denote an element $x \in \ims_{\infty}(n)$  by $x=(I_1(x), \cdots, I_n(x))$ or just by $x=(I_1, \cdots, I_n)$. The family $K=\{K_i=[x_i^K,x_{i+1}^K]\}_{i=0}^{p_K-1}$ will be sometimes just denoted by $K=\{K_i=[x_i,x_{i+1}]\}_{i=0}^{p-1}$. \\
Let us equip  now  the set $\ims_{\infty}(n)$  with the following topology. Two elements $$x=(I_1(x), \cdots, I_n(x)) \qquad \mbox{and} \qquad y=(I_1(y), \cdots, I_n(y))$$ of $\ims_{\infty}(n)$ are said to be closed if the $1$-manifolds $I_i(x)$ and $I_i(y)$  are closed (in the sense that we have the inequality $\mbox{length}(I_i(x) \backslash \overset{\circ}{I}_i(y)) < \epsilon $ for some $\epsilon > 0$ too small) to each other for all $i$. Notice that $\ims_{\infty}(1)$ is the one point space.
In order to define the space $\ims_k(n)$ (for $k \geq 0$ be an integer), recall first the notion of the \textit{complexity} of a map. 

\begin{defn}
Let $T$ be a finite totally ordered set, $n$ be an integer, and  $f\colon T \lra \overline{n}=\{1, \cdots, n\}$ be a map. The \emph{complexity } of $f$, denoted by $\emph{cplx}(f)$, is defined as follows. 
\begin{enumerate}
\item[$\bullet$] If $n=0$ or $n=1$ then $\emph{cplx}(f)=0$. 
\item[$\bullet$] If $n=2$, let $\sim$ be the equivalence relation on  $T$ generated by
$$a \sim b\;\; \mbox{if $a$ is adjacent to $b$ and }\; f(a)=f(b).$$
The complexity of $f$ is equal to the number of equivalence classes minus $1$. 
\item[$\bullet$] If $n >2$, let $f_{ij} \colon f^{-1}(\{i,j\}) \lra \{i,j\}$ denote the restriction of $f$ on $f^{-1}(\{i, j\})$. The complexity of $f$ is equal to the maximum of complexities of the restrictions $f_{ij}$ as $\{i, j\}$ ranges over the two-element subsets of $\overline{n}$. That is,
\begin{eqnarray} \label{complexity_defn}
\emph{cplx}(f)=\underset{1 \leq i < j \leq n}{\emph{Max}}(\emph{cplx}(f_{ij})).
\end{eqnarray}
\end{enumerate}
\end{defn}

Note that a map $f \colon T \lra \{1, \cdots, n\}$ can be viewed as a word of length $\left|T\right|$ on the alphabet $\{1, \cdots, n\}$ (here $\left|T\right|$ denotes the cardinal of $T$). The \textit{length of the alphabet}  $\{1, \cdots n\}$ is defined to be its cardinal.   

\begin{expl}
\begin{enumerate}
\item[(a)] If $\left|T\right|=5$, $n=2$ and $f$ is defined by the word $f=12212$ then $\emph{cplx}(f)=3$.
\item[(b)] Assume that $\left|T\right|=8$,  $n=3$ and $f=31232113$, and consider the following tabular
\begin{center}
\begin{tabular}{| l | l | l |l |}
\hline
map & $f_{12}=12211$ & $f_{13}=313113$ & $f_{23}=32323$ \\ \hline
complexity & $\emph{cplx}(f_{12})=2$ & $\emph{cplx}(f_{13})=4$ & $\emph{cplx}(f_{23})=4$ \\
\hline
\end{tabular}
\end{center}
By this tabular and by (\ref{complexity_defn}), we deduce that $\emph{cplx}(f)=4$.
\end{enumerate}
\end{expl}

Let $x =(I_1, \cdots, I_n) \in \ims_{\infty}(n)$ defined by a partition $K=\{K_i\}_{i=0}^{p-1} \in \mathcal{P}_n$. Let $T_x$  be the set defined by  $T_x= \{0,1,\cdots, p-1\}$. For each $k \in T_x$, it is clear that there exixts a unique $i_k \in \{1, \cdots, n\}$ such that $K_k \subseteq I_{i_k}$ (this comes from the condition $(P_2)$ above). This defines a map
\begin{eqnarray} \label{mapf}
f_x \colon T_x \longrightarrow \{1, \cdots, n\}
\end{eqnarray}

\begin{defn}
The  \emph{complexity} of  an element $x \in \ims_{\infty}(n)$, denoted by $\emph{cplx}(x)$, is defined to be the complexity of the map  $f_x$. That is, 
$$\emph{cplx}(x)=\emph{cplx}(f_x).$$
\end{defn}

We are now ready to define $\ims_k(n)$.

\begin{defn} \label{cplxofx_defn}
For $k \geq 0$, the space $\ims_k(n)$  is the subspace of $\ims_{\infty}(n)$ defined by  
$$\ims_k(n)=\{x \in \ims_{\infty}(n)| \ \emph{cplx}(x) \leq k\}.$$
\end{defn}

The following figure is an element  of $\ims_2(4)$.

\begin{figure}[h]
\begin{center}
\includegraphics[scale=0.5]{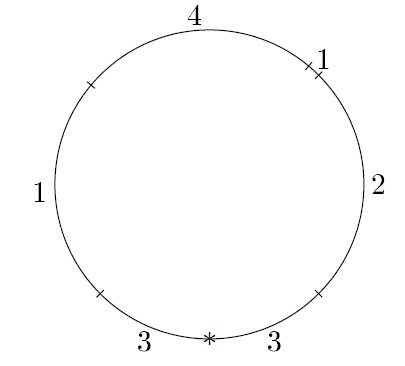}
\end{center}
\end{figure}

\begin{rmq}
The space $\ims_2(n)$ is a finite regular $CW$-complex with one cell for each $f_x(T_x)$ (see (\ref{mapf}) above for the definition of $f_x$). A cell labeled by some $f_x(T_x)$ is homeomorphic to $\prod_{i=1}^n \Delta^{\left|f_x^{-1}(i)\right|-1}$. For instance, the figure above is an element of a cell homeomorphic to  $\Delta^1 \times \Delta^1$, which is labeled by $314123$.
\end{rmq}




Now we want to define what so called \textit{cactus with $n$ lobes}. Let  $x=(I_1, \cdots, I_n) \in \ims_2(n)$ defined by a family of closed intervals $K=\{K_{i1}, \cdots, K_{il_i}\}_{i=1}^q \in \mathcal{P}_n$. Assume that  each $I_i$ is on the form  $I_i=\cup_{r=1}^{l_i}K_{ir}$, and define on $S^1$ the equivalence relation $\sim_i$ (for $1 \leq i \leq n$) generated by  
 $$(t_1 \sim_i t_2) \;\; \mbox{if and only if}\;\;  (t_1, t_2 \in K_{jr} \ \mbox{with} \   jr \notin \{i1, \cdots, il_i\}).$$
It is easy to see that the quotient of $S^1$ by this equivalence is homeomorphic to $S^1$. Let us denote by  $\pi_i$ the canonical surjection.
  $$\pi_i \colon S^1 \longrightarrow S^1/\sim_i \cong S^1.$$  
We thus construct a map $$c(x) \colon S^1 \longrightarrow (S^1)^n$$ 
defined by $c(x)=(\pi_1, \cdots, \pi_n)$, and called the \textit{cactus map}. Its image is called the \textit{cactus with $n$ lobes} associated to $x \in \ims_2(n)$. Let Coend$_{S^1}$ denote the coendomorphism operad  on $S^1$. Then there is an embedding
$$\overline{\tau}_n \colon \ims_2(n) \longrightarrow \mbox{Coend}_{S^1}(n)$$ defined by 
$$\overline{\tau}(x)=c(x).$$

\begin{rmq}
The collection of  spaces $\{\overline{\tau}_n(\ims_2(n))\}_{n \geq 0}$ is not a suboperad of $\emph{Coend}_{S^1}(\bullet)$. Indeed, let
 $x$ be an element of  $\ims_2(2)$ labaled by $212$.  Then $c(x)=(\pi_1, \pi_2) \in \emph{Coend}_{S^1}(2)$. Using now the operad structure of  $\emph{Coend}_{S^1}$, we get 
$$c(x) \circ_2 c(x)=(\pi_1, \pi_1 \circ \pi_2, \pi_2 \circ \pi_2)=(\pi_1, \mbox{constant map}, \pi_2 \circ \pi_2),$$ and it is impossible to find an element $z \in \mathcal{F}_2(3)$ such that $c(z)= (\pi_1, \mbox{constant map}, \pi_2 \circ \pi_2)$.
\end{rmq}

Since the collection $\{\overline{\tau}_n(\mathcal{F}_2(n))\}_{n \geq 0}$ is not far to be an operad, to get the right one, we introduce the space $\mbox{Mon}(I, \partial I)$ defined as follows. Let $\partial I=\{-1, 1\}$ denotes the boundary of $I$. Let $\varphi \colon I \lra I$ be a weakly monotone map such that its restriction on $\partial I$ coincides with the identity map $id_{\partial I}$. Then the map $\varphi$ passes to the quotient and gives a map $\fitilde \colon S^1 \lra S^1$, which is a typical element of $\mbox{Mon}(I, \partial I)$. 

\begin{rmq} \label{mon_is_contractible}
The space $\emph{Mon}(I, \partial I)$ is homeomorphic to the totalization $\emph{Tot} \Delta^{\bullet} \simeq *$. The homeomorphism $\emph{Mon}(I, \partial I) \stackrel{\cong}{\lra} \emph{Tot} \Delta^{\bullet}$ sends $\fitilde \in \emph{Mon}(I, \partial I)$ to $(t_1, \cdots, t_k) \longmapsto (\varphi(t_1), \cdots, \varphi(t_k))$.
\end{rmq}

 Considering the embedding  $$\tau_n \colon \ims_2(n) \times \mbox{Mon}(I, \partial I) \longrightarrow \mbox{Coend}_{S^1}(n)$$ defined by 
$$\tau_n(x, \fitilde)=c(x) \circ \fitilde \colon S^1 \lra (S^1)^n,$$
we have the following proposition.

\begin{prop} \label{suboperad_coend}
 The collection $\{\emph{im}(\tau_n)\}_{n \geq 0}=\{\tau_n(\ims_2(n) \times \mbox{Mon}(I, \partial I))\}_{n \geq 0}$ is a suboperad of $\mbox{Coend}_{S^1}$. 
\end{prop}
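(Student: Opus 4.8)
We must show that the collection $\mathcal{M} := \{\mbox{im}(\tau_n)\}_{n \ge 0}$ is closed under the operad structure of $\mbox{Coend}_{S^1}$. Since a sub-collection of an operad that contains the unit and is stable under all the insertion operations $\circ_i$ is automatically a suboperad (associativity and the unit axioms being inherited), it is enough to verify these two points. For the unit, recall that $\ims_2(1) = \ims_\infty(1)$ is a single point, the relation $\sim_1$ on $S^1$ being trivial since its unique lobe is all of $S^1$; hence its cactus map is $\mbox{id}_{S^1}$. Pairing this point with $\widetilde{\varphi} = \mbox{id}_{S^1}$, which lies in $\mbox{Mon}(I, \partial I)$ as the class of $\mbox{id}_I$, gives $\tau_1(*, \mbox{id}_{S^1}) = \mbox{id}_{S^1}$, the unit of $\mbox{Coend}_{S^1}(1)$; so the unit belongs to $\mbox{im}(\tau_1) \subseteq \mathcal{M}$.

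For stability under $\circ_i$, fix $(x, \fitilde) \in \ims_2(n) \times \mbox{Mon}(I, \partial I)$, $(y, \widetilde{\psi}) \in \ims_2(m) \times \mbox{Mon}(I, \partial I)$ and $1 \le i \le n$, and write $c(x) = (\pi_1, \dots, \pi_n)$ and $c(y) = (\rho_1, \dots, \rho_m)$. Unwinding the definition of $\circ_i$ in $\mbox{Coend}_{S^1}$ gives
$$\tau_n(x, \fitilde) \circ_i \tau_m(y, \widetilde{\psi}) = (\pi_1\fitilde, \dots, \pi_{i-1}\fitilde, \ \rho_1\widetilde{\psi}\pi_i\fitilde, \dots, \rho_m\widetilde{\psi}\pi_i\fitilde, \ \pi_{i+1}\fitilde, \dots, \pi_n\fitilde).$$
The plan is to produce a ``grafted cactus'' $z \in \ims_2(n+m-1)$ together with a reparametrization $\widetilde{\eta} \in \mbox{Mon}(I, \partial I)$ such that this tuple equals $\tau_{n+m-1}(z, \widetilde{\eta})$. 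The cactus $z$ is built by inserting $y$ into the $i$-th lobe of $x$: the lobe $I_i(x)$ is a union of some of the subintervals defining $x$; collapsing its complement yields the circle $S^1/\sim_i$, onto which one transports the subdivision defining $y$; pulling this back along $\pi_i$, and refining further at the original subdivision points of $x$, produces a family of subintervals of $I$; after rescaling so that the $n+m-1$ lobes of the result -- the lobes $I_l(x)$ for $l \ne i$, and the $m$ images of the lobes of $y$ -- all have equal length, one obtains $z \in \ims_\infty(n+m-1)$. Its associated word $f_z$ is $f_x$ in which the letters equal to $i$ are collectively replaced by one copy of $f_y$, distributed over the occurrences of $i$.

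One then checks $\mbox{cplx}(z) \le 2$, so that $z \in \ims_2(n+m-1)$. For a two-element subset $\{a,b\}$ of the set of lobes of $z$ the restriction $f_z|_{\{a,b\}}$ is: a restriction of $f_x$ when $a$ and $b$ are both ``old'' lobes; a restriction of $f_y$ when both are ``new''; and, in the mixed case, a word obtained from a two-element restriction of $f_x$ by deleting and relabelling letters. In every case its complexity is at most $\max(\mbox{cplx}(x), \mbox{cplx}(y)) \le 2$, whence $\mbox{cplx}(z) \le 2$. Finally $\widetilde{\eta}$ is defined by composing the homeomorphism of $S^1$ that undoes the length-normalization of $z$ with a reparametrization that restricts to $\fitilde$ over the lobes $I_l(x)$, $l \ne i$, and interpolates $\widetilde{\psi} \circ \pi_i \circ \fitilde$ over $I_i(x)$; one then compares $\tau_{n+m-1}(z, \widetilde{\eta})$ with the displayed tuple component by component.

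I expect the construction and verification of $\widetilde{\eta}$ to be the main obstacle. The delicate point is that $\widetilde{\psi}$ reparametrizes the entire circle $S^1 \cong S^1/\sim_i$, including the collapsed points of $\pi_i$ that lie away from the base point; the various prescriptions coming from $\fitilde$, from $\widetilde{\psi}$, and from the length-normalization must therefore be interleaved into a single weakly monotone self-map of $I$ fixing $\partial I$, with continuity checked at the boundary of $I_i(x)$. This is exactly where the extra factor $\mbox{Mon}(I, \partial I)$ is indispensable: it lets $\widetilde{\eta}$ collapse subintervals and rescale non-uniformly, which is precisely the feature that the bare collection $\{\overline{\tau}_n(\ims_2(n))\}$ lacks, as the preceding Remark shows, and is the reason $\mbox{im}(\tau_n)$ -- rather than $\overline{\tau}_n(\ims_2(n))$ -- is the correct operad.
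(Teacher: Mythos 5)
Your overall strategy is sound, and it is genuinely different from what the paper does: the paper does not verify this proposition at all, but disposes of it in one line by citing Salvatore's Proposition 4.5, whereas you attempt a self-contained check that the image contains the unit and is stable under the insertions $\circ_i$. The parts you do carry out are essentially correct: the unit computation, the formula for $\tau_n(x,\fitilde)\circ_i\tau_m(y,\widetilde{\psi})$ in $\mbox{Coend}_{S^1}$, the description of the grafted subdivision of $I$ (refine the intervals of $K(x)$ lying in $I_i(x)$ by the $\pi_i$-pullback of the subdivision of $y$, then normalize lengths), and the complexity estimate: old/old restrictions of $f_z$ are restrictions of $f_x$, new/new restrictions are restrictions of $f_y$ (the pieces of $f_y$ are redistributed over the components of $I_i(x)$ in their original order, so nothing is permuted), and mixed restrictions come from $f_x|_{\{a,i\}}$ by deleting some $i$-blocks and renaming, so $\mbox{cplx}(z)\leq 2$.

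However, as written there is a genuine gap, and it sits exactly where the content of the proposition lies: the claim that a single pair $(z,\widetilde{\eta})\in\ims_2(n+m-1)\times\mbox{Mon}(I,\partial I)$ reproduces the composite. You describe $\widetilde{\eta}$ only impressionistically ("interpolates $\widetilde{\psi}\circ\pi_i\circ\fitilde$", "one then compares component by component") and you yourself flag its construction and verification as an unresolved obstacle; without it the closure under $\circ_i$ is not established. The gap can be closed along the lines you indicate, but the definition must be made exact. Let $\lambda\colon S^1\to S^1$ be the piecewise-affine normalization that scales each interval of $K(x)$ outside $I_i(x)$ by the factor $n/(n+m-1)$ and each $\pi_i$-pullback of an interval of $K(y)$ inside $I_i(x)$ by the factor $mn/(n+m-1)$ (these are homeomorphisms since all intervals involved have positive length), and let $z$ be the cactus whose subdivision is the $\lambda$-image of the refined subdivision; let $w\colon S^1\to S^1$ be the identity off $I_i(x)$ and $(\pi_i|_{I_i(x)})^{-1}\circ\widetilde{\psi}\circ\pi_i$ on $I_i(x)$, and set $\widetilde{\eta}=\lambda\circ w\circ\fitilde$, which lies in $\mbox{Mon}(I,\partial I)$ as a composite of globally defined weakly monotone basepoint-preserving maps (so no boundary-interleaving issue arises). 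The desired coordinatewise identity then reduces to two elementary checks of slopes and basepoints: $\pi_l^z\circ\lambda=\pi_l^x$ for old lobes $l\neq i$ (slope $(n+m-1)\cdot n/(n+m-1)=n$), and $\pi_j^z\circ\lambda=\rho_j\circ\pi_i$ on $I_i(x)$ for new lobes $j$ (slope $(n+m-1)\cdot mn/(n+m-1)=mn=m\cdot n$), after which $\tau_{n+m-1}(z,\widetilde{\eta})=\tau_n(x,\fitilde)\circ_i\tau_m(y,\widetilde{\psi})$ follows, and note in particular that $\widetilde{\psi}$ must not enter the definition of the subdivision of $z$ (only the $\pi_i$-pullback does), being entirely absorbed into $\widetilde{\eta}$ via $w$. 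With this supplied, your argument becomes a complete proof of what the paper only quotes from Salvatore.
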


\begin{proof} The proof follows immediately from \cite[Proposition 4.5]{sal10}.
\end{proof}

Let $MS=\{MS(n)\}_{n \geq 0}$ be the collection of topological spaces defined by 
\begin{eqnarray} \label{ms(n)_definition}
 MS(n)=\left\{ \begin{array}{lll}
                  \ims_2(n) \times \mbox{Mon}(I, \partial I) & \mbox{if} & n \geq 1\\
									* & \mbox{if} & n=0.
                 \end{array} \right.
\end{eqnarray}
By transferring the operad structure of $\{\mbox{im}(\tau_n)\}_{n \geq 0}$ (given by Proposition~\ref{suboperad_coend}) on $MS$ via embeddings $\tau_n$, we endow $MS$ with an operad structure. 

\begin{defn} \label{cacti_operad_defn}
The operad $MS$ is called the \textit{cacti operad}.
\end{defn}

\begin{prop} \label{ms2_s1}
There exists a weak equivalence $(\phi, id) \colon S^1 \stackrel{\sim}{\lra} MS(2)$.
\end{prop}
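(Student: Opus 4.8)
The plan is to exhibit an explicit map $\phi \colon S^1 \to \ims_2(2)$ and show that $(\phi, \mathrm{id})$ (where $\mathrm{id}$ is the inclusion of a point into the contractible space $\mathrm{Mon}(I, \partial I)$) is a weak equivalence. First I would recall that by Remark~\ref{mon_is_contractible} the space $\mathrm{Mon}(I, \partial I)$ is contractible, so it suffices to produce a weak equivalence $\phi \colon S^1 \stackrel{\sim}{\lra} \ims_2(2)$ and then compose trivially. The key structural fact I would use is the Remark following Definition~\ref{cplxofx_defn}: $\ims_2(2)$ is a finite regular $CW$-complex with one cell for each image $f_x(T_x)$, where for $n=2$ the possible nonempty images are words in the alphabet $\{1,2\}$, and a cell labelled by a word $w$ is homeomorphic to $\Delta^{|f_x^{-1}(1)|-1} \times \Delta^{|f_x^{-1}(2)|-1}$. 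Since elements of $\ims_2(2)$ have complexity at most $2$, the relevant words $w$ (up to the combinatorial data that actually matters, namely the alternation pattern) have only a bounded number of "blocks": the cells of positive dimension correspond to the patterns $1\,2$, $2\,1$, $1\,2\,1$, $2\,1\,2$, and the $0$-cells to the degenerate cases where one of the two manifolds has collapsed or where the splitting points coincide.

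Concretely, I would describe $\ims_2(2)$ as follows: an element is an ordered pair $(I_1, I_2)$ of arcs in $S^1$ of equal length whose interiors are disjoint and whose union, together with the cutting points, has complexity $\leq 2$; this forces $S^1$ to be divided into at most three arcs alternating between $I_1$ and $I_2$. Such a configuration is determined by the cyclic position of the cut points on $S^1$ subject to the equal-length constraint, which is a one-parameter family — I would make this precise by writing down coordinates (e.g.\ parametrising by the position of one endpoint, with the other endpoints determined), yielding a continuous surjection from $S^1$ (or an interval with endpoints identified) onto $\ims_2(2)$, and checking it is a homeomorphism by verifying injectivity and then invoking compactness of $S^1$. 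Alternatively, and perhaps more cleanly, I would identify the CW-structure explicitly: two $1$-cells (labelled $12$ and $21$, each a copy of $\Delta^0 \times \Delta^0$ — wait, I should double-check the dimension count) glued along their endpoints to form a circle, and then $\phi$ is this identification. The cleanest route is to observe that $\ims_2(2)$, as a regular CW-complex, has its cell poset isomorphic to that of a standard CW-structure on $S^1$ with two $0$-cells and two $1$-cells, hence the two spaces are homeomorphic, and in particular weakly equivalent.

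The main obstacle I anticipate is bookkeeping the cell structure correctly — in particular pinning down exactly which words of complexity $\leq 2$ in the alphabet $\{1,2\}$ give rise to genuinely distinct cells (accounting for the fact that many words determine the same subspace of $\ims_2(2)$ since only the pattern of maximal constant blocks matters, not the multiplicities within a block, because $\Delta^{m} $ for the block lengths gets glued), and checking that the attaching maps assemble these cells into a circle rather than, say, an interval or a wedge. A careful case analysis of the complexity-$2$ condition for $n=2$ should resolve this: complexity $2$ means the word $f_x$ has exactly $3$ maximal blocks, complexity $1$ means $2$ blocks, complexity $0$ means $1$ block or the empty/trivial case; translating each into a description of $(I_1,I_2)$ and tracking the closure relations gives the circle. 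Once the CW-structure is nailed down, the equivalence is immediate and the pair $(\phi, \mathrm{id})$ works because $\mathrm{Mon}(I,\partial I)$ contributes nothing up to homotopy.
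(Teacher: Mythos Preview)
Your overall strategy is sound and your two proposed routes both work. The paper takes precisely the first route you mention in passing: it writes down an explicit piecewise formula for $\phi \colon S^1 \to \ims_2(2)$, parametrising by a single point $\tau \in [-1,1]/(-1\sim 1)$ that locates one of the cut points (the other cut points being forced by the equal-length condition), checks this is a homeomorphism, and then invokes contractibility of $\mbox{Mon}(I,\partial I)$ exactly as you say. So your ``parametrise by the position of one endpoint'' alternative \emph{is} the paper's proof.

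Your preferred CW-structure argument is a genuinely different and equally valid route, but your bookkeeping is muddled and should be straightened out before you commit to it. The words $12$ and $21$ label the two $0$-cells (each gives $\Delta^0 \times \Delta^0$, a point: the configuration is rigid once you know the order, since each arc must have length $1$), while $121$ and $212$ label the two $1$-cells (each gives $\Delta^1 \times \Delta^0 \cong \Delta^1$). Your phrase ``the $0$-cells [correspond] to the degenerate cases where one of the two manifolds has collapsed'' is incorrect: neither $I_1$ nor $I_2$ is ever allowed to collapse in $\ims_2(2)$, since condition $(P_2)$ forces both to have length $1$. The boundary of the $121$-cell is obtained by shrinking the first or third block, yielding the $0$-cells $21$ and $12$ respectively; symmetrically for $212$. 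Hence the two $1$-cells are each attached to the two $0$-cells at distinct endpoints, assembling into a circle. Once this is written cleanly the argument is complete. The trade-off: the paper's explicit formula is quicker and gives you a named map $\phi$ to use later (as indeed the paper does, in (\ref{mapphi-id})), whereas the CW argument is more conceptual and generalises more readily, but requires the case analysis you anticipated.
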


\begin{proof}
Define  $\phi \colon S^1 \lra \ims_2(2)$ by 
\begin{eqnarray} \label{s1_i22}
\phi(\tau)=\left\{\begin{array}{lll}
                    (I_1=K_0 \cup K_2, I_2=K_1), K_0=[-1, \tau], K_1=[\tau, 1+\tau], K_2=[1+\tau, 1] & \mbox{if} & -1 < \tau < 0 \\ 
										(I_1=K_1, I_2=K_0 \cup K_2), K_0=[-1, \tau-1], K_1=[\tau-1, \tau], K_2=[\tau, 1] & \mbox{if} & 0 < \tau < 1 \\
										(I_1=K_0, I_2=K_1), K_0=[-1, 0], K_1=[0, 1] & \mbox{if} & \tau=0 \\
										(I_1=K_1, I_2=K_0), K_0=[-1, 0], K_1=[0, 1] & \mbox{if} & \tau=\pm 1 
                   \end{array} \right.
\end{eqnarray}
It is not difficult to see that $\phi$ is a homeomorphism. Therefore the map 
\begin{eqnarray} \label{mapphi-id}
(\phi, id) \colon S^1 \lra MS(2)=\ims_2(2) \times \mbox{Mon}(I, \partial I),
\end{eqnarray}
 where $id$ is the map that sends each point of $S^1$ to the identity map $id_{S^1} \colon S^1 \lra S^1$,  is a weak equivalence since the space $\mbox{Mon}(I, \partial I)$ is contractible by Remark~\ref{mon_is_contractible}.
\end{proof}
 
The following theorem is originally due to P. Salvatore in \cite{sal10}. He gives a nice geometric proof to it.  Here we furnish another proof, which is more combinatorial. We provide in fact explicit formulas of the action of $MS$ on $\mbox{Tot} \mcalo^{\bullet}$.

\begin{thm} \emph{\cite[Theorem 5.4]{sal10}} \label{action_cacti_operad}
Let $\opt$ be  a cosimplicial space defined by a multiplicative operad $\mcalo$. Then the cacti operad $MS$ acts on the totalization $\emph{Tot} \mcalo^{\bullet}$.
\end{thm}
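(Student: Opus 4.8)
The plan is to define an action map $\gamma_n\colon MS(n) \times (\mathrm{Tot}\,\opt)^n \lra \mathrm{Tot}\,\opt$ directly, by producing, for each element $(x,\fitilde) \in MS(n) = \ims_2(n) \times \mathrm{Mon}(I,\partial I)$, each $n$-tuple $(a_1^{\bullet},\dots,a_n^{\bullet})$ of points of $\mathrm{Tot}\,\opt$, and each $k \geq 0$ and $t \in \Delta^k$, an element $\theta = \theta(x,\fitilde;a_1,\dots,a_n;t) \in \mcalo^k = \mcalo(k)$, and then checking that this assignment is (i) well-defined, (ii) continuous, (iii) compatible with the $d^i$ and $s^j$ so that it defines an element of $\mathrm{Tot}\,\opt$, and (iv) an operad action. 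The construction of $\theta$ is the combinatorial heart promised in the introduction: given $x \in \ims_2(n)$ presented by a family $K = \{K_i = [x_i,x_{i+1}]\}_{i=0}^{p-1} \in \mathcal{P}_n$, the map $f_x\colon T_x \to \overline{n}$ of (\ref{mapf}) is read as a word $w = f_x(0)f_x(1)\cdots f_x(p-1)$ on the alphabet $\overline{n}$. One first pulls back the simplex coordinate: using the homeomorphism of Remark~\ref{mon_is_contractible} one replaces $t \in \Delta^k$ by $\varphi(t) \in \Delta^k$, and then one uses the breakpoints $x_1,\dots,x_{p-1}$ of $K$ together with the coordinates of $\varphi(t)$ to ``cut'' the interval $I = [-1,1]$ into subintervals; this cutting refines the word $w$ into a longer word and simultaneously records, for each lobe $i$, how many of the cut-points fall into $I_i$, hence a point of $\Delta^{|f_x^{-1}(i)|-1+(\text{extra cuts})}$, i.e. a coordinate at which to evaluate $a_i^{\bullet}$. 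Feeding $a_i$ evaluated at that coordinate into $\mcalo(|f_x^{-1}(i)|+\cdots)$, and then composing all these operations along the word $w$ via the insertion maps $\circ_i$ of $\mcalo$ (inserting $e \in \mcalo(0)$ and $\mu \in \mcalo(2)$ where the multiplicative structure dictates), produces the single operation $\theta \in \mcalo(k)$.

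The key steps, in order, are: (1) make precise the ``suitable cutting'' of $I$ determined by $(K,\varphi,t)$ — this is purely a matter of interleaving the finite set $\{x_1,\dots,x_{p-1}\}$ with the set $\{\varphi(t_1),\dots,\varphi(t_k)\}$ (after the identification $-1\sim 1$ on $S^1$, being careful about which lobe ``wraps around'' the basepoint), and to each resulting block assign a letter of $\overline{n}$ according to which $I_i$ contains it; (2) for each $i \in \overline{n}$, collect the lengths of the consecutive runs of the letter $i$ in this refined word to get a point of a simplex, evaluate $a_i^{\bullet}$ there to get an element of $\mcalo(\text{run-count})$; (3) assemble these using the operad composition of $\mcalo$ in the order prescribed by the refined word, inserting the multiplication $\mu$ to concatenate adjacent blocks with different letters and the nullary operation $e$ to handle empty blocks — exactly the recipe by which $d^i,s^j$ were built in Proposition~\ref{multoperads-cosimplicialobjects}; (4) verify naturality in $t$, i.e. that replacing $t$ by $d^i t$ or $s^j t$ in $\Delta^k$ corresponds under the construction to applying the coface/codegeneracy of $\opt$ to $\theta$ — this is where the definition of $\Delta^{\bullet}$ with its $d^i$ duplicating a coordinate and $s^j$ deleting one meshes with $\mu\circ_2 x$, $x\circ_i\mu$, $\mu\circ_1 x$ and $x\circ_j e$; hence $\theta$ assembles into a natural transformation $\Delta^{\bullet}\to\opt$, i.e. a point of $\mathrm{Tot}\,\opt$; (5) check continuity in $(x,\fitilde)$ and in the $a_j$'s, which reduces to the observation that on each cell of $\ims_2(n)$ (recall the $CW$-structure: a cell for each value $f_x(T_x)$, homeomorphic to $\prod_i\Delta^{|f_x^{-1}(i)|-1}$) the word $w$ is constant and $\theta$ depends continuously on the simplex coordinates and on $\varphi$ via the continuous map of Remark~\ref{mon_is_contractible}, with the cell-to-cell gluing matching the cosimplicial/degeneracy identities; (6) verify the operad axioms for the resulting maps $\gamma_n$, i.e. associativity $\gamma\circ(\mathrm{id}\times\gamma) = \gamma\circ(\gamma\times\mathrm{id})$ (in the two relevant patterns) and unitality, by tracing through how the operad product on $MS$ — transported from $\mathrm{Coend}_{S^1}$ along $\tau_n$ as in Proposition~\ref{suboperad_coend} — acts on cacti: gluing cacti corresponds to substituting words into words, which corresponds under our recipe to iterated $\mcalo$-composition, so associativity of $\gamma$ follows from associativity of the $\circ_i$ in $\mcalo$.

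The main obstacle I expect is step (4) combined with step (6): getting the bookkeeping of the ``cutting'' exactly right so that it is simultaneously (a) genuinely natural in the simplicial variable $t$ — no off-by-one errors in how a repeated coordinate of $d^i t$ produces an extra $\mu$ or an extra $e$ — and (b) strictly compatible with the operad composition on $MS$, which is itself only defined indirectly through the embedding into $\mathrm{Coend}_{S^1}$ and the reparametrizations $\varphi$. The subtlety is that the cactus map $c(x)$ can collapse whole lobes to constant maps (as the remark after Proposition~\ref{suboperad_coend}'s setup shows with $c(x)\circ_2 c(x)$ having a constant component), and one must see that the corresponding $\mcalo$-side phenomenon is precisely the insertion of $e \in \mcalo(0)$; reconciling the geometric degeneration with the algebraic nullary operation, uniformly over all cells and all $k$, is the delicate point. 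Everything else — continuity, the CW-cell description, the translation of words into $\mcalo$-operations — is routine once the cutting is pinned down, and for these I would lean on the combinatorial structure already set up ($f_x$, complexity, the cell decomposition of $\ims_2(n)$) and on \cite[Proposition 4.5]{sal10} for the operad structure on $MS$, giving explicit worked examples (as promised in the introduction) in lieu of a heavy general computation.
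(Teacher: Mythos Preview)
Your proposal is correct and follows essentially the same approach as the paper: the paper too interleaves the partition points $\{x_i\}$ of $K$ with the reparametrized simplex coordinates $\{\varphi(t_j)\}$ to produce a refined family $\{L_l\}_{l=0}^{m}$ and a word $f\colon [m]\to\overline{n}$, reads off for each lobe $i$ a point $y_i$ of a simplex at which to evaluate $a_i^{\bullet}$, and then assembles the results into $\theta_n(f)\in\mcalo(k)$ via an explicit induction on $n$ using $\mu$ and the $\circ_i$. The only substantive difference is presentational: the paper pins down the ``suitable cutting'' with explicit rescaling maps $g_{i_q},\alpha$ and gives closed-form recursive formulas for $\theta_n(f)$ according to the shape of the word, whereas you describe the same recipe in prose; and, like you, the paper handles the ambiguity when some $x_i=\varphi(t_j)$ and the continuity across cells by invoking the associativity $\mu(\mu,id)=\mu(id,\mu)$ together with naturality of the $a_r^{\bullet}$.
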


\begin{proof} 
Let $(x, \fitilde) \in MS(n)=\ims_2(n) \times \mbox{Mon}(I, \partial I)$, let $a_i^{\bullet} \in \mbox{Tot} \mathcal{O}^{\bullet}$, $1 \leq i \leq n$. Our aim is to construct from these data a family
\begin{eqnarray}
\theta_n((x, \fitilde), (a_1^{\bullet}, \cdots, a_n^{\bullet}))_k \colon \Delta^k\longrightarrow \mathcal{O}^k, k \geq 0,
\end{eqnarray}
of maps that commute with cofaces and codegeneracies. 

Let $t= (-1=t_0 \leq t_1 \leq \cdots \leq t_k \leq t_{k+1}=1)$ be an element of $\Delta^k$. Define a family of closed intervals $\{J_j\}_{j=0}^k$ by setting $J_j=[\varphi(t_j), \varphi(t_{j+1})]$.  Assume that $x \in \ims_2(n)$ is defined by a family of $p$ closed intervals $K=\{[x_i, x_{i+1}]\}_{i=0}^{p-1}$, and consider the set 
$$E=\{\varphi(t_j)| \ 0 \leq j \leq k+1 \} \cup \{x_i| \ 0 \leq i \leq p\}.$$
 Define now a family $\{L_l=[a_l, b_l]\}_{l=0}^m$ of closed subintervals of $I$ as follows. 
\begin{enumerate}
\item[-] each $a_l$ or $b_l$  belongs to $E$;
\item[-] the interior of each $L_l$ contains no element of $E$; 
\item[-] $\cup_{l=0}^m L_l=I$ and 
\begin{eqnarray} \label{m_equation}
m=k+p-1.
\end{eqnarray}
\end{enumerate}
  
\begin{center} \textbf{(a) Assume that for all $i$ and for all $j$, $x_i \neq \varphi(t_j)$} \end{center}

In this case  there exists, for each $l \in \{0, \cdots, m\}$, an unique element $i_l \in \{1,\cdots, n\}$ and an unique element $j_l \in \{0, \cdots, k\}$ such that $L_l \subseteq I_{i_l}$ and $L_l \subseteq J_{j_l}$. This gives two maps
\begin{eqnarray} \label{htf}
\xymatrix{[k] & [m] \ar[l]_-{h} \ar[r]^-{f} & \overline{n}}
\end{eqnarray}
defined by 
$$h(l)=j_l \qquad \mbox{and} \qquad f(l)=i_l.$$ 
It is easy to see that the map $h$ is a morphism in the simplicial category $\Delta$. It is also easy to see that $f$ is a surjective map, and its complexity is less than or equal to $2$ (this is because we have taken $x$ in $\ims_2(n)$, and by Definition~\ref{cplxofx_defn} we have $\mbox{cplx(x)} \leq 2$). 

Let $i \in \overline{n}$. We want to explicitly construct an element $y _i \in \Delta^{f^{-1}(i)}=  \Delta^{\left|f^{-1}(i)\right|-1}$. Let us set 
$$I_i= \cup_{j=0}^{l_i} K_{i_j}, K_{i_j}=[x_{i_j}, x_{i_j+1}] \subseteq [-1, 1], \mbox{and}\ x_{i_{l_i+1}}=1.$$
Define by induction a family $\{g_{i_q} \colon [-1,1] \lra [-1,1]\}_{q=0}^{l_i+1}$ of maps as follows. 

\begin{eqnarray} \label{gio}
g_{i_0}(z)=\left\{ \begin{array}{lll}
                    -1 & \mbox{if} & z \in [-1, x_{i_0}] \\
										z-(x_{i_0}+1) & \mbox{if} & z > x_{i_0},
                 \end{array} \right.  
\end{eqnarray}
and for $0 \leq q \leq l_i$,  
\begin{eqnarray} \label{giq}
g_{i_{q+1}}(z)=\left\{ \begin{array}{lll}
                    z & \mbox{if} & z \in [-1, g_{i_q} \circ \cdots \circ g_{i_0} (x_{i_q+1})] \\
										g_{i_q} \circ \cdots \circ g_{i_0} (x_{i_q+1}) & \mbox{if} & z \in [g_{i_q} \circ \cdots \circ g_{i_0} (x_{i_q+1}), g_{i_q} \circ \cdots \circ g_{i_0} (x_{i_{q+1}})] \\
										z-(x_{i_{q+1}}-x_{i_q+1}) & \mbox{if} & z > g_{i_q} \circ \cdots \circ g_{i_0} (x_{i_{q+1}})
                 \end{array} \right.  
\end{eqnarray}
Intuitively, the map  $g_{i_0}$ contracts the interval $[-1, x_{i_0}]$ to $-1$, and moves other points by the translation of vector $-(x_{i_0}+1)$, the map $g_{i_1}$ contracts the interval $[g_{i_0}(x_{i_0+1}), g_{i_0}(x_{i_1})]$ to $g_{i_0}(x_{i_0+1})$, and moves other points by the translation of vector $-(x_{i_1}-x_{i_0+1})$, and so on. At the end of this process, we obtain an interval of length $\frac{2}{n}$. More precisely, if we define $g$ to be the composite 
\begin{eqnarray} \label{mapg}
g=g_{i_{l_i+1}} \circ g_{i_{l_i}} \circ \cdots \circ g_{i_1} \circ g_{i_0},
\end{eqnarray}
 then
$$g([-1,1])=[-1, \frac{2-n}{n}].$$
Define also a map $\alpha \colon [-1, \frac{2-n}{n}] \lra [-1, 1]$ by 
\begin{eqnarray} \label{mapalpha}
\alpha(z)=nz+n-1.
\end{eqnarray}
Notice the map $\alpha$ fixes $-1$, and sends $\frac{2-n}{n}$ to $1$. In fact $\alpha$ allows to rescale the interval $[-1, \frac{2-n}{n}]$. Consider now the map $\widetilde{g} \colon [-1, 1] \lra [-1,1]$ defined by 
$$
\widetilde{g}= \alpha \circ g.
$$
For $j \in \{0, \cdots, l_i \}$, if $t_{i_j}^1, \cdots, t_{i_j}^{v_j}$ denote elements of the set $\{\varphi(t_1), \cdots, \varphi(t_k)\}$ that belong to $K_{i_j}$, then $y _i \in  \Delta^{\left|f^{-1}(i)\right|-1}$ is defined by
\begin{eqnarray} \label{yi_definition}
y_i=(\widetilde{g}(t_{i_0}^1), \cdots, \widetilde{g}(t_{i_0}^{v_0}), \widetilde{g}(x_{i_0+1})),  \widetilde{g}(t_{i_1}^1), \cdots, \widetilde{g}(t_{i_1}^{v_1}), \cdots, \widetilde{g}(x_{i_{l_i-1}+1}), \widetilde{g}(t_{i_{l_i}}^1), \cdots, \widetilde{g}(t_{i_{l_i}}^{v_{l_i}})).
\end{eqnarray}
An illustration for $y_i$ is given in the first part of Example~\ref{cacti_action_expl}. 


Let us construct now by induction on $n$ the operation $\theta_n(((x, \fitilde), (a_1^{\bullet}, \cdots, a_n^{\bullet}))_k(t) \in \mathcal{O}(k)$. The map $f$ will be thought as a word of length $m+1$ on the alphabet $\{1, \cdots, n\}$. If $W$ is a word on an alphabet of length $*$, we will write $\theta_*(W)$ for the associated operation. For instance, the operation $\theta_n(((x, \fitilde), (a_1^{\bullet}, \cdots, a_n^{\bullet}))_k(t)$ will be sometimes denoted by $\theta_n(f)$.  

If $n=1$ then $c(x)=id_{S^1}$. Define $\theta_1((x, \fitilde), a_1^{\bullet})(t) \in \mcalo(k)$ by
$$\theta_1((x, \fitilde), a_1^{\bullet})(t)=a_1^{k}(t)$$
If $n=2$, let $i, j$ be two distinct elements inside  $\{1,2\}$. Since the complexity of $f$ is less than or equal to $2$, there are two possibilities for writing the word $f$.
\begin{enumerate}
 \item[$\bullet$] Assume that the map $f \colon [m] \longrightarrow \{1,2\}$ is on the form $f=\underbrace{ i \cdots i}_{r+1} \underbrace{j \cdots j}_{s+1}$. \\
This implies that we have exactly two closed intervals $K_0, K_1$ defining $x=(I_1, I_2) \in \ims_2(2)$, and therefore, $p=2$ (recall that $p$ is the number of intervals $K_i$ defining $x \in \ims_2(2)$). We claim that $r+s=k$. Indeed, since the length of the word $f$ is equal to $m+1$, it follows that
$$
\begin{array}{lll}
(r+1)+(s+1) &= & m+1 \\
            & = & k+p \ \  \mbox{since $m=k+p-1$ by (\ref{m_equation}) above} \\
						& = & k+2 \ \ \mbox{since $p=2$.}
\end{array}
$$							
Let  $\mu \in \mcalo(2)$ denote the multiplication. Define $\theta_2((x, \fitilde), (a_1^{\bullet}, a_2^{\bullet}))_k(t) \in \mcalo(r+s)=\mcalo(k)$ by the formula 
\begin{eqnarray} \label{theta2_defn1}
\theta_2((x, \fitilde), (a_1^{\bullet}, a_2^{\bullet}))_k(t) = \theta_2(i\cdots i j \cdots j)= \mu(a_i^r(y_i), a_j^s(y_j)). 
\end{eqnarray}

\item[$\bullet$] Now we assume that the word $f$ is on the form $f=\underbrace{i \cdots i}_{r_1} \underbrace{j \cdots j}_{s+1} \underbrace{i \cdots i}_{r_2}$.\\
This implies that $p=3$. Like before, we can check that $r_1+r_2-1 +s-1=k$. Define the operation $\theta_2((x, \fitilde), (a_1^{\bullet}, a_2^{\bullet}))_k(t) \in \mathcal{O}(r_1+r_2-1+s-1)=\mcalo(k)$ by
\begin{eqnarray} \label{theta2_defn2}
\theta_2((x, \fitilde), (a_1^{\bullet}, a_2^{\bullet}))_k(t)=  \theta_2(i \cdots i j \cdots j i \cdots i)= a_i^{r_1+r_2-1}(y_i) \circ _{r_1} a_j^s(y_j).
\end{eqnarray}
\end{enumerate}
Let $n \geq 3$. For $i \leq n-1$, the operation $\theta_{i}((x, \fitilde), (a_1^{\bullet}, \cdots, a_i^{\bullet}))_k$ will be denoted just by $\theta_*(-)$. Assume  that  $\theta_*(W)$ is constructed for each word $W$ on an alphabet of length $* \leq n-1$. We want to construct $\theta_{n}(f)$.  Set $f(0)=i_0$ and define the integer 
$$m_0=\mbox{Max} \{j \in [m]| \ f(j)=i_0\}.$$ 
There are two possibilities depending of the fact that the word $f$ ends by the letter $i_0$ or not.
\begin{enumerate}
\item[$\bullet$] If $m_0=m$ then the map $f$ is on the form 
$$f=\underbrace{i_0 \cdots i_0}_{r_1} b_{11} \cdots b_{1s_1} \underbrace{i_0 \cdots i_0}_{r_2} b_{21} \cdots b_{2s_2} \cdots \underbrace{i_0 \cdots i_0}_{r_q} b_{q1} \cdots b_{qs_q}\underbrace{i_0 \cdots i_0}_{r_{q+1}},$$
with $b_{js} \neq i_0$ for all $j$ and for all $s$, and with  $r_1 + \cdots +r_{q+1}$ copies of $i_0$. Let  us set  
$$u = \left(\sum_{i=1}^{q+1} r_i \right)-1 \qquad \mbox{and} \qquad v=\sum_{i=1}^q r_i.$$
Define $\theta_{n}(f)$ by the formula
\begin{eqnarray} \label{thetan_formula}
\theta_{n}(f) = (((a_{i_0}^{u}(y_{i_0}) \circ_{v} \theta_*(b_{q1} \cdots b_{qs_q})) \circ_{v-r_q}  \cdots ) \circ_{r_1} \theta_*(b_{11} \cdots b_{1s_1}).
\end{eqnarray}
A perfect illustration for this formula is given by Example~\ref{cacti_action_expl2} below. 
\item[$\bullet$] If $m_0 < m$ then the map f is on the form 
$$f=i_0 \cdots i_0 b_1 \cdots b_w i_0 \cdots i_0 c_1 \cdots c_s$$
with $c_j \neq i_0$ for all $j$. Let $r$ be the number of letters (in the alphabet $\{1, \cdots, n\}$) appearing in the word $f(\{0, \cdots, m_0\})=i_0 \cdots i_0 b_1 \cdots b_w i_0 \cdots i_0$. Then, since each letter of the word $b_1 \cdots b_w$ does not appear in the word $c_1 \cdots c_s$  (because the complexity of $f$ is less than or equal to $2$), there is exactly $n-r$ letters appearing in the word $c_1 \cdots c_s$. Define
\begin{eqnarray} \label{thetan_formula2}
\theta_{n}(f)= \mu(\theta_r(i_0 \cdots i_0 b_1 \cdots b_w i_0 \cdots i_0), \theta_{n-r}(c_1 \cdots c_s)),
\end{eqnarray}
\end{enumerate}

\begin{center} \textbf{(b) Now we assume that there exists some integers $i$ and $j$ such that $x_i=\varphi(t_j)$} \end{center}

In this case, there is a finite number of  possibilities (depending of the fact that we consider the interval $[\varphi(t_j), x_i]$ or $[x_i, \varphi(t_j)]$ in the family $\{L_l\}_{l=0}^m$) to define the word $f$ . It is not difficult to show, by using the naturality of the maps $a_r^{\bullet}\colon \Delta^{\bullet} \longrightarrow \mathcal{O}^{\bullet}$ and the fact that $\mu(\mu,id)=\mu(id, \mu)$ (we have this equality because $\mcalo$ is a multiplicative operad),  that all these possibilities lead to the same element $\theta_n((x, \fitilde), (a_1^{\bullet}, \cdots, a_n^{\bullet}))_k(t) \in  \mathcal{O}(k)$. A good illustration of that is given by the second part of Example~\ref{cacti_action_expl} below.

We can check that the maps $\theta_n((x, \fitilde), (a_1^{\bullet}, \cdots, a_n^{\bullet}))_k \colon \Delta^k \longrightarrow \mathcal{O}^k$  thus defined are continuous and commute with  cofaces and codegeneracies. The continuity  comes essentially from the fact that $\mu(\mu,id)=\mu(id, \mu)$. The continuity of the map $\theta_n \colon MS(n) \times (\mbox{Tot} \mathcal{O}^{\bullet})^n \longrightarrow \mbox{Tot} \mathcal{O}^{\bullet}$ also comes from the same fact.  
\end{proof}

\begin{center}
\begin{tabular}{cc}
\includegraphics[scale=0.5]{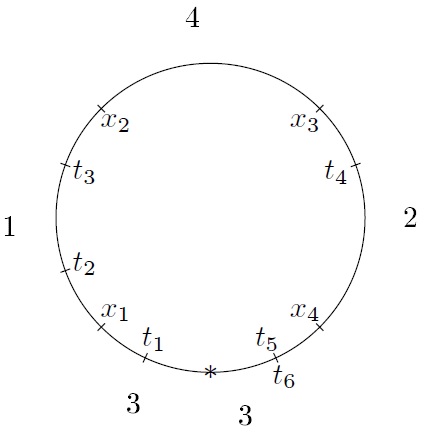}

&

\includegraphics[scale=0.5]{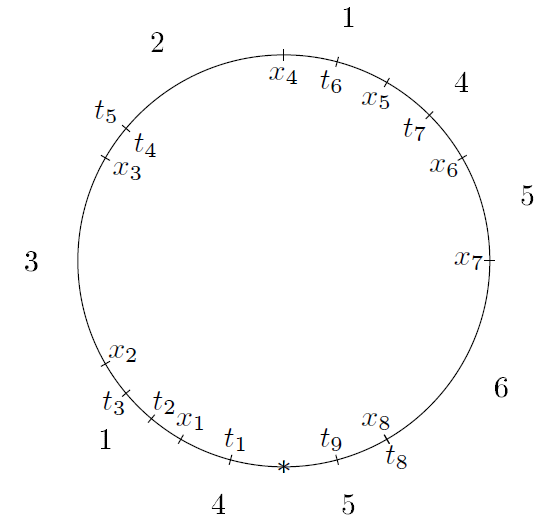}

\\

Figure a

&

Figure b
\end{tabular}

\end{center}

\begin{expl} \label{cacti_action_expl}
\begin{itemize}
\item[(a)] Let $n=4$, $x \in \ims_2(4)$ and $t=(t_1, \cdots, t_6) \in \Delta^6$ (see Figure a). Assume that $\fitilde=id_{S^1}$. Then $k=6$, $p=5$, $m=k+p-1=10$ and the map $f \colon [10] \longrightarrow \{1,2,3,4\}$ is defined by the word $f=33111422333$. Now let us explicitly define $y_1 \in \Delta^{\left|f^{-1}(1)\right|}= \Delta^2$. First of all, we have $I_1=[x_1, x_2]$. So the map $g$ (see (\ref{mapg})) is just equal to $g_{i_0}$ (notice that here $i_0=1$), and by the definition of this latter map (see (\ref{gio})) we have $g_{i_0}(z)=z-(x_1+1)$ for each $z \in [x_1, x_2]$. On the other hand, the map $\alpha$ here is defined by $\alpha(z)=4z+3$ (see  (\ref{mapalpha})). Therefore the image of each $z \in [x_1, x_2]$ under the composite $\widetilde{g} = \alpha \circ g_{i_0}$ gives $4z-4x_1-1$. Hence, 
$$y_1=(4t_2-4x_1-1, 4t_3-4x_1-1) \in \Delta^2$$ 
A similar computation gives $y_2=4t_4-4x_3-1 \in \Delta^1$. For $y_3$ we use the formula (\ref{yi_definition}), and we obtain 
$$y_3=(4t_1+3, 4x_1+3, 4t_5-4x_4+4x_1+3, 4t_6-4x_4+4x_1+3) \in \Delta^4.$$ 
Now we can define $\theta_4(f)$.
$$
\begin{array}{lll}
\theta_4(f) &= & a_3^4(y_3) \circ_2 \theta_3(111244) \; \; \mbox{by (\ref{thetan_formula})} \\
            & = & a_3^4(y_3) \circ_2 \mu(a_1^2(y_1), \theta_2(244)) \; \; \mbox{by (\ref{thetan_formula2})} \\
						& = & a_3^4(y_3) \circ_2 \mu(a_1^2(y_1), \mu(a_4^0(*), a_2^1(y_2))) \in \mcalo(6) \; \; \mbox{by (\ref{theta2_defn1})}.
\end{array}
$$
\item[(b)] Let $n=6$, $x \in \ims_2(6)$ and $t=(t_1, \cdots, t_9) \in \Delta^9$ (see Figure b above). Assume that  $\fitilde=id_{S^1}$. Then  $k=9$, $p=9$ and $m=k+p-1=17$. Since $t_8=x_8$, it follows that there are two possibilities to define the map $f \colon [17] \longrightarrow \{1,2,3,4,5,6\}$. \\
If  $f=441113222114456655$ then we have (by applying formulas (\ref{thetan_formula2}), (\ref{thetan_formula}), (\ref{theta2_defn1}) and (\ref{theta2_defn2}))
$$\theta_6(f)= \mu(a_4^3(y_4) \circ_2 (a_1^4(y_1) \circ_3 \mu(a_3^0(*), a_2^2(y_2))), a_5^2(y_5) \circ_1 a_6^1(y_6)) \in \mathcal{O}(9).$$
Here $y_5=(-1 \leq y_{51} \leq y_{52} \leq 1)$ is an element of $\Delta^2$. Let us denote it by $y_5^2$.

If $f=441113222114456555$ then we have (again by formulas (\ref{thetan_formula2}), (\ref{thetan_formula}), (\ref{theta2_defn1}) and (\ref{theta2_defn2})) 
$$\theta_6(f)= \mu(a_4^3(y_4) \circ_2 (a_1^4(y_1) \circ_3 \mu(a_3^0(*), a_2^2(y_2))), a_5^3(y_5) \circ_1 a_6^0(*)) \in \mathcal{O}(9).$$
Here $y_5=(-1 \leq y_{50} \leq y_{51} \leq y_{52} \leq 1)$ is an element of $\Delta^3$ with $y_{50}=y_{51}$. Let us denote it by $y_5^3$.

To check that these two possibilities lead to the same operation in $\mathcal{O}(9)$, it suffices to check that
$$a_5^3(y_5^3) \circ_1 a_6^0(*)= a_5^2(y_5^2) \circ_1 a_6^1(y_6).$$
Here we go
$$\begin{array}{lll}
a_5^3(y_5^3) \circ_1 a_6^0(*) & = & a_5^3(y_{50}, y_{51}, y_{52}) \circ_1 a_6^0(*)\;\;\mbox{because}\;\;y_5^3=(y_{50}, y_{51}, y_{52})\;\; \mbox{with}\;\;y_{50}=y_{51}  \\
  & = &  a_5^3(d^1(y_{51}, y_{52})) \circ_1 a_6^0(*)\;\;\mbox{because}\;\;(y_{51}, y_{51}, y_{52})= d^1(y_{51}, y_{52}), d^1 \colon \Delta^2 \lra \Delta^3\\
	& = & (d^1(a_5^2(y_{51}, y_{52}))  \circ_1 a_6^0(*) \;\;\mbox{by the naturality of}\;\; a_5^{\bullet} \\
& = & (a_5^2(y_{51}, y_{52}) \circ_1 \mu) \circ_1 a_6^0(*)\;\; \mbox{by the definition of the coface map $d^1 \colon \mcalo^2 \lra \mcalo^3$} \\
  & = &  a_5^2(y_{51}, y_{52}) \circ_1 (\mu \circ_1 a_6^0(*))\;\;\mbox{by the associativity in $\mcalo$} \\
	& = &  a_5^2(y_{51}, y_{52}) \circ_1 (d^1(a_6^0(*)))\;\; \mbox{by the definition of the coface map $d^1 \colon \mcalo^0 \lra \mcalo^1$} \\
  & = & a_5^2(y_{51}, y_{52}) \circ_1 a_6^1(d^1(*)) \;\;\mbox{by the naturality of}\;\; a_6^{\bullet}\\
  & = & a_5^2(y_5^2) \circ_1 a_6^1(y_{6}) .

\end{array}
$$

\end{itemize}
\end{expl}

\begin{expl} \label{cacti_action_expl2} 
In this example, we are in the case (a) of the proof of Theorem~\ref{action_cacti_operad}. Take $t=(t_1, \cdots, t_{14}) \in \Delta^{14}$ such that $t_i \neq t_j$ whenever $i \neq j$, and take $f=1112244221111333555511$. The operation $\theta_5(f) \in \mcalo(14)$ is then defined by
$$\begin{array}{lll}
\theta_5(f) & = & (a_1^8(y_1) \circ_7 \theta_2(3335555)) \circ_3 \theta_2(224422) \; \; \mbox{by (\ref{thetan_formula})} \\
            & = & (a_1^8(y_1) \circ_7 \mu(a_3^2(y_3), a_5^3(y_5))) \circ_3 (a_2^3(y_2) \circ_2 a_4^1(y_4)) \; \; \mbox{by (\ref{theta2_defn1}) and (\ref{theta2_defn2})}.
	\end{array}		
$$				
\end{expl}

\section{McClure-Smith operad $\dtwo$ and its action on $\mbox{Tot} \opt$} \label{dtwo_section}

Here we recall the construction of the operad $\dtwo$. We also give the details of its action on $\mbox{Tot} \opt$ (notice that this action was built by McClure and Smith in \cite{mcc_smith04}). We will write $\Delta_+$ for the category $\Delta \cup \{\emptyset\}$, and a covariant functor $X^{\bullet} \colon \Delta_+ \longrightarrow \mbox{Top}$ will be called an \textit{augmented cosimplicial space}. In all this section, the standard cosimplicial space $\Delta^{\bullet}$ will be viewed as an augmented cosimplicial space with $\Delta^{\emptyset}=\emptyset$. 

Let us begin with the definition of the augmented cosimplicial space 
$$\Xi_n^2(X^{\bullet}, \cdots, X^{\bullet}) \colon \Delta_+ \longrightarrow \mbox{Top}$$
in which we have $n$ copies of $X^{\bullet}$.

Let $\overline{n}$ as in the previous section. Define $Q_n$ to be the category whose  objects are pairs $(T,f)$, where $T$ is an object of the category $\Delta_+$ and $f \colon T \longrightarrow \overline{n}$ is a morphism in sets. A morphism from $(T,f)$ to $(T',f')$ consists of a morphism $g \colon T \longrightarrow T'$ in $\Delta_+$  such that $f=f'g$. Define also  $Q_n^2$ to be the full subcategory of $Q_n$ consisting of  pairs $(T,f)$ such that $\mbox{cplx}(f) \leq 2$. Consider now the diagram
 $$ \xymatrix{
      Q_n^2 \ar@{^{(}->}[r]^{\rho_2} \ar[d]_{\phi_n} & Q_n  \ar[r]^-{\psi_n} & \mbox{Top}^n \ar[r]^-{\prod_n} & \mbox{Top}\\
      \Delta_+ 
    } $$
in which 
\begin{enumerate}
\item[-] $\rho_2$ is the inclusion functor,
\item[-] $\psi_n$ is defined to be $\psi_n(T,f)=(X^{f^{-1}(1)},\cdots , X^{f^{-1}(n)})$,
\item[-] $\prod_n$ is the product functor and 
\item[-] $\phi_n$ is the projection on the first component.
\end{enumerate}
The covariant functor $\Xi_n^2(X^{\bullet}, \cdots, X^{\bullet}) \colon \Delta_+ \longrightarrow \mbox{Top}$ is defined 
to be the left Kan extension of the composite  $\prod_n \circ \psi_n \circ \rho_2$ along  $\phi_n$. By the definition of a left Kan extension, the functor $\Xi_n^2(X^{\bullet}, \cdots, X^{\bullet})$ is explicitly defined as follows. 

Let  $S$ be an object of the category $\Delta_+$. We want to define the space $\Xi_n^2(X^{\bullet}, \cdots, X^{\bullet})(S)$ associated to $S$.  First we define  the category $A_{nS}$ whose objects  are triples $(h, T, f)$ where $T$ is an object in $\Delta_+$, $h\colon T \longrightarrow S$ is a morphism in $\Delta_+$, and $(T,f)$ is an object in $Q_n^2$. A morphism from $(h, T, f)$ to $(h', T', f')$ consists of a morphism $g\colon T \longrightarrow T'$ such that $h=h'g$ and $f=f'g$. We will sometimes write $S \stackrel{h}{\longleftarrow} T \stackrel{f}{\longrightarrow} \overline{n}$ for an object $(h, T, f)$ of the category $A_{nS}$. Next we define the functor $p_n\colon A_{nS} \longrightarrow Q_n^2$ by $p_n(h,T,f)= (T,f)$, and we consider the $A_{nS}$-diagram 
$$F_{nS}= \prod_n \circ \psi_n \circ \rho_2 \circ p_n \colon A_{nS} \longrightarrow \mbox{Top}.$$ 
The space $\Xi_n^2(X^{\bullet}, \cdots, X^{\bullet})(S)$ is then explicitly defined to be the colimit of $F_{nS}$. That is,  
\begin{eqnarray} \label{xi2_definition}
 \Xi_n^2(X^{\bullet}, \cdots, X^{\bullet})(S)=\underset{A_{nS}}{\mbox{colim}} F_{nS}.
\end{eqnarray}  
Notice that $\Xi^2_0(X^{\bullet}, \cdots, X^{\bullet})(S)$ is the one point space since each Cartesian product $\underset{i \in \overline{0}}{\prod} f^{-1}(i)$ is a point (because $\overline{0}=\emptyset$).\\  
On morphisms, the functor  $\Xi_n^2(X^{\bullet}, \cdots, X^{\bullet}) \colon \Delta_+ \lra \mbox{Top}$ is defined in the obvious way.  We are now ready to define the operad $\dtwo$.

\begin{defn} \label{dtwo_definition} 
For $n \geq 0$ the space $\dtwo(n)$ is defined to be 
$$\mathcal{D}_2(n)=\emph{Tot} (\Xi_n^2(\Delta^{\bullet},\cdots, \Delta^{\bullet})) = \emph{Nat}(\Delta^{\bullet}, \Xi_n^2(\Delta^{\bullet},\cdots, \Delta^{\bullet})).$$
\end{defn}

McClure and Smith show \cite[Section 9]{mcc_smith04} that the collection $\{\dtwo(n)\}_{n \geq 0}$ is a topological operad. They also show \cite[Theorem 9.1 (a)]{mcc_smith04} that this operad is actually weakly equivalent in the category of operads to the  operad $B_2$ of little $2$-disks. 

\begin{defn} \label{xi2_algebra_defn}  An augmented cosimplicial space $X^{\bullet}$ is a \emph{$\Xi^2$-algebra}  is endowed with a \emph{$\Xi^2$-structure} if there is a family
$$\{\Theta_n \colon \Xi_n^2(X^{\bullet},\cdots, X^{\bullet}) \lra X^{\bullet} \}_{n \geq 0}$$
of natural transformations satisfying axioms (a), (b) and (c) of \cite[Definition 4.5]{mcc_smith04} with $\mathcal{F}$ replaced by $\Xi^2$.   
\end{defn}

\begin{defn} An augmented cosimplicial space $X^{\bullet}$ is said to be \emph{reduced} if $X^{\emptyset}$ is the one point space. 
\end{defn}



\begin{prop} \emph{\cite[Proposition 10.3]{mcc_smith04}} \label{mcc_smith_proposition}
A sequence $\mathcal{O}=\{\mcalo(n)\}_{n \geq 0}$ of topological spaces is endowed with a structure of multiplicative operad if and only if the associated reduced augmented cosimplicial space $\opt$ is a $\Xi^2$-algebra. 
\end{prop}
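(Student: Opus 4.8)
The plan is to produce a dictionary between the two kinds of structure and to check that it carries the axioms of one onto the axioms of the other; concretely, I would build mutually inverse constructions between multiplicative operad structures on $\mcalo$ and $\Xi^2$-structures on the reduced augmented cosimplicial space $\opt$, using throughout the explicit colimit presentation (\ref{xi2_definition}) of $\Xi_n^2$. By that presentation, a natural transformation $\Theta_n\colon\Xi_n^2(\opt,\dots,\opt)\lra\opt$ is the same as a family of maps $\prod_{i=1}^n\mcalo^{f^{-1}(i)}\lra\mcalo^{S}$, one for each object $S\stackrel{h}{\longleftarrow}T\stackrel{f}{\longrightarrow}\overline{n}$ of $A_{nS}$ (so that $\mbox{cplx}(f)\le 2$), compatible with the morphisms of $A_{nS}$ and natural in $S$. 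The whole proof is then the translation of these data, and of the $\Xi^2$-algebra axioms of \cite[Definition 4.5]{mcc_smith04}, into operad-theoretic terms (cf. \cite[Proposition 10.3]{mcc_smith04}).

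For the implication ``multiplicative operad $\Rightarrow$ $\Xi^2$-algebra'', recall (Proposition~\ref{multoperads-cosimplicialobjects}) that $\opt$, together with its augmentation $\opt^{\emptyset}=*\to\mcalo(0)$ picking out $e$, is built entirely from $e$, $\mu$ and the insertions $\circ_i$. Given $S\stackrel{h}{\longleftarrow}T\stackrel{f}{\longrightarrow}\overline{n}$ and $(b_i)\in\prod_i\mcalo^{f^{-1}(i)}$, I would define the corresponding map by the same recursion on the complexity-$\le 2$ word $f$ that is used in Section~\ref{cacti_section}: write $f$ either as a concatenation, in which case the factors are glued by $\mu$ as in (\ref{theta2_defn1}) and (\ref{thetan_formula2}), or as a nesting $j\cdots j\,W\,j\cdots j$, in which case one uses a single insertion $\circ$ as in (\ref{theta2_defn2}) and (\ref{thetan_formula}); the morphism $h$ is then accounted for by the corresponding composite of cofaces and codegeneracies, i.e. of operations of the form $\mu\circ_?(-)$ and $(-)\circ_? e$. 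Then: well-definedness of the recursion is exactly associativity in $\mcalo$ together with $\mu\circ_1\mu=\mu\circ_2\mu$; the resulting family descends to $\underset{A_{nS}}{\mbox{colim}}\,F_{nS}$ and is natural in $S$ by the cosimplicial relations; and the $\Xi^2$-algebra axioms reduce to the operad axioms together with $\mu\circ_1 e=\mu\circ_2 e=id$.

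For the converse, let $X^{\bullet}$ be a reduced augmented cosimplicial space equipped with a $\Xi^2$-structure $\{\Theta_n\}$. Put $\mcalo(k):=X^{[k]}$; the augmentation, i.e. the unique maps $\iota_k\colon\emptyset\to[k]$ out of the initial object of $\Delta_+$, single out $e:=X^{\iota_0}(*)\in\mcalo(0)$, $id:=X^{\iota_1}(*)\in\mcalo(1)$ and $\mu:=X^{\iota_2}(*)\in\mcalo(2)$, and initiality of $\emptyset$ forces the necessary compatibilities (e.g. $X^{d^0}(e)=X^{d^1}(e)=id$). The insertion $\circ_i\colon\mcalo(p)\times\mcalo(q)\lra\mcalo(p+q-1)$ is recovered by evaluating $\Theta_2$ at $S=[p+q-1]$ on the colimit class of the nesting object with $T=[p+q+1]$, word $f=\underbrace{1\cdots1}_{i}\,\underbrace{2\cdots2}_{q+1}\,\underbrace{1\cdots1}_{p+1-i}$ (which has complexity $2$, hence is admissible), and $h\colon T\to S$ the evident composite of codegeneracies, applied to $(a,b)\in\mcalo(p)\times\mcalo(q)$. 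One then checks that $(\mcalo,\circ_i,id)$ is an operad and that $e,\mu$ satisfy the relations of Remark~\ref{multoperads-cosimplicialobjects2} -- here the $\Xi^2$-algebra associativity axiom, pulled back along the inclusion $\rho_2\colon Q_n^2\hookrightarrow Q_n$, yields precisely the operad associativity and the interchange relations -- and that running the first construction on this multiplicative operad returns the original $\{\Theta_n\}$ (and conversely); this gives the claimed equivalence.

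The step I expect to be the main obstacle is the coherence bookkeeping inside the colimit (\ref{xi2_definition}). In the forward direction one must verify that the recursive assignment is constant along every morphism of $A_{nS}$, in particular along the morphisms that collapse a complexity-$2$ word onto a shorter one; in the reverse direction one must check that the element read off from $\Theta_2$ is independent of the auxiliary choice of $h$ and of padding the word $f$ with redundant letters. Both ultimately reduce to the single assertion that $\mu$ is associative and two-sided unital, which is exactly what the $\Xi^2$-algebra axioms encode, so the two verifications are one and the same seen from opposite sides. A further point requiring care is that the bound $\mbox{cplx}\le 2$ is tight: it is still generous enough to express every individual insertion $\circ_i$ through the three-block word above -- so the full operad structure is reconstructible -- yet it forbids the deeper interleavings that would impose relations stronger than a single multiplication, so that nothing beyond a multiplicative operad structure can, or need, be recovered.
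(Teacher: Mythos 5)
The paper does not actually prove this proposition: it is quoted from \cite[Proposition 10.3]{mcc_smith04}, and the only piece of the argument reproduced in the paper is the forward-direction construction of the structure maps $\crf$, namely formulas (\ref{crf1_eq})--(\ref{crf2_eq}) in Section~\ref{dtwo_section}. Your dictionary is the same as that of the cited source, and your converse recovery of $\circ_i$ from $\Theta_2$ on a three-block word followed by codegeneracies matches the way $h_*\circ\crf$ is used in the proof of Theorem~\ref{compatibilty_thm}(b); so the route is the intended one. Two steps, however, need repair rather than mere expansion.

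First, in the forward direction your recursive definition of the cocone maps does not typecheck as written. The map attached to an object $S\stackrel{h}{\longleftarrow}T\stackrel{f}{\longrightarrow}\overline{n}$ of $A_{nS}$ must land in $\mcalo^S$; taking $h=\mathrm{id}$ and $f=1^{r+1}2^{s+1}$, your prescription ``glue the blocks by $\mu$ as in (\ref{theta2_defn1}), then account for $h$'' produces $\mu(x,y)\in\mcalo^{r+s}$, while the target is $\mcalo^T=\mcalo^{r+s+1}$: the Section~\ref{cacti_section} formulas already have a collapsing map built into them. The correct definition of $\crf$ inserts extra cofaces, $\crf(x,y)=\mu(x,d^0y)$ and $\crf(x,y_1,\dots,y_n)=\gamma(x,d^0d^{i_1+1}y_1,\dots)$ as in (\ref{crf1_eq})--(\ref{crf2_eq}); these $d^0$'s are not cosmetic, since they are exactly what makes the family compatible with the injective (face-type) morphisms of $A_{nS}$ and hence descend to the colimit (\ref{xi2_definition}). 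Second, in the converse direction the ``if and only if'' needs more than the operad axioms for the recovered $(\circ_i,id)$ and the relations of Remark~\ref{multoperads-cosimplicialobjects2} for $e,\mu$: you must also verify that the given cofaces and codegeneracies of $X^{\bullet}$ coincide with those produced from $\mu$ and $e$ by Proposition~\ref{multoperads-cosimplicialobjects}, since otherwise the two constructions are not mutually inverse. Initiality of $\emptyset$ only pins down the basepoints $e$, $id$, $\mu$; the identities $d^0x=\mu\circ_2 x$, $d^ix=x\circ_i\mu$, $s^jy=y\circ_j e$ must be extracted from the unitality and consistency axioms of the $\Xi^2$-structure, and that verification is where the real content of \cite[Section 10]{mcc_smith04} lies.
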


In \cite[Theorem 9.1 (b)]{mcc_smith04}$\dtwo$, McClure and Smith prove that the operad $\dtwo$ acts on $\mbox{Tot} \opt$, when the reduced augmented cosimplicial space $\opt$ is built from a multiplicative operad $\mcalo$. We now recall this action. To do that,  we will define (for each $n \geq 0$) a map 
$$\beta_n \colon \dtwo(n) \times (\mbox{Tot} \opt)^n \lra \mbox{Tot} \opt.$$
Let $\alpha=\{\alpha_k\}_{k \geq 0} \in \dtwo(n)$, and let $(a_1^{\bullet},\cdots, a_n^{\bullet}) \in (\mbox{Tot} \opt)^n$. We want to define $\beta_n(\alpha, (a_1^{\bullet}, \cdots, a_n^{\bullet})) \in \mbox{Tot} \opt$.  Form the diagram  
$$\xymatrix{\Delta^{\bullet} \ar[r]^-{\alpha} & \Xi^2_n(\Delta^{\bullet}, \cdots, \Delta^{\bullet}) \ar[rr]^-{\prod_{i=1}^n a_i^{\bullet}} & & \Xi^2_n(\opt, \cdots, \opt) \ar[r]^-{\Theta_n} & \opt}$$
in which 
\begin{enumerate}
\item[-] the natural transformation $\prod_{i=1}^n a_i^{\bullet}$ is induced by $a_1^{\bullet},\cdots, a_n^{\bullet}$, and 
\item[-] $\Theta_n$ is furnished by the $\Xi^2$-structure on $\opt$ (we have such a structure by Proposition~\ref{mcc_smith_proposition}).
\end{enumerate}
The natural transformation $\beta_n(\alpha, (a_1^{\bullet}, \cdots, a_n^{\bullet})) \colon \Delta^{\bullet} \lra \opt$ is then defined to be the composite
$$\beta_n(\alpha, (a_1^{\bullet}, \cdots, a_n^{\bullet}))= \Theta_n \circ \prod_{i=1}^n a_i^{\bullet} \circ \alpha.$$

One can interpret a $\Xi^2$-structure in another way. McClure and Smith \cite{mcc_smith04} show that the following definition is equivalent to Definition~\ref{xi2_algebra_defn}. 

\begin{defn}
 An augmented cosimplicial space $X^{\bullet}$ is equipped with a \emph{$\Xi^2$-structure} if for each map $f \colon T \lra \overline{n}$ (here $T$ is a totally ordered set and $n \geq 0$) with complexity $\leq 2$, there exists a map 
$$ \crf\colon X^{f^{-1}(1)} \times \cdots \times X^{f^{-1}(n)} \longrightarrow X^T$$
 such that the collection of maps $\{ \crf\}$ is \emph{consistent} (see \cite[Definition 9.4]{mcc_smith04}), \emph{commutative} (see \cite[Definition 9.5]{mcc_smith04}), \emph{associative} (see \cite[Definition 9.6]{mcc_smith04}) and \emph{unital} (see \cite[Definition 9.7]{mcc_smith04}).
\end{defn}

We are going to interpret (in the new language of the $\Xi^2$-structure on $\opt$) the action of $\dtwo$ on $\mbox{Tot} \opt$. We need this interpretation because it will be used in the proof of Theorem~\ref{compatibilty_thm} below. Let $\alpha$, $a_1^{\bullet}, \cdots, a_n^{\bullet}$ as before. We want to construct a natural transformation 
$$\{\beta_n(\alpha, (a_1^{\bullet},\cdots, a_n^{\bullet}))_k \colon \Delta^k \lra \mcalo^k \}_{k \geq 0}.$$ 
So let $[k]$ be an object of the category $\Delta$, and let $t \in \Delta^k$. By Definition~\ref{dtwo_definition},  $\alpha_k$ is a map from $\Delta^k$ to $\Xi^2_n(\Delta^{\bullet}, \cdots, \Delta^{\bullet})([k])$. Recalling that this latter space is the colimit of certain $A_{n[k]}$-diagram (see (\ref{xi2_definition}) above), there exists an object 
$$ \xymatrix{[k] & T \ar[l]_-{h} \ar[r]^-{f} & \overline{n}}$$
in the category $A_{n[k]}$ such that $\alpha_k(t)$ is the equivalence class of some $\widetilde{\alpha}_k(t) \in \Delta^{f^{-1}(1)} \times \cdots \times \Delta^{f^{-1}(n)}$. That is, 
$$\alpha_k(t)=[\widetilde{\alpha}_k(t)].$$
Define  $\beta_n(\alpha, (a_1^{\bullet},\cdots, a_n^{\bullet}))_k(t)$ to be the image of $\widetilde{\alpha}_k(t)$ under the composite
$$\xymatrix{\Delta^{f^{-1}(1)} \times \cdots \times \Delta^{f^{-1}(n)} \ar[rrr]^-{(a_1^{f^{-1}(1)}, \cdots, a_n^{f^{-1}(n)})} \ar[rrrrd] & & & \mcalo^{f^{-1}(1)} \times \cdots \times \mcalo^{f^{-1}(n)} \ar[r]^-{\crf} & \mcalo^T \ar[d]^{h_*} \\
    & & & &    \mcalo^k,}$$
where $h_*$ is the map induced by $h$, and $\crf$ is given by the $\Xi^2$-structure of $\opt$, which is itself induced by the multiplicative structure of the operad $\mcalo$ (we will recall the construction of $\crf$ \cite[Section 10]{mcc_smith04} in the following lines).	That is,
\begin{eqnarray} \label{beta_definition}
\beta_n(\alpha, (a_1^{\bullet},\cdots, a_n^{\bullet}))_k(t)=h_* \circ \crf \circ (a_1^{f^{-1}(1)}, \cdots, a_n^{f^{-1}(n)}) (\widetilde{\alpha}_k(t)).
\end{eqnarray}	
It is straightforward to check that the map $\beta_n(\alpha, (a_1^{\bullet},\cdots, a_n^{\bullet}))_k \colon \Delta^k \lra \mcalo^k$ is well defined. It is also straightforward to check that the collection of maps $\{\beta_n\}_{n \geq 0}$ defines an action of $\dtwo$ on $\mbox{Tot} \opt$.

We now recall the construction of $\crf$. Let $\mu \in \mcalo(2)$ as in the proof of Theorem~\ref{action_cacti_operad}, and let us denote the operad structure of $\mcalo$ by 
$$\gamma \colon \mcalo(n) \times \mcalo(i_1) \times \cdots \times \mcalo(i_n) \lra \mcalo(i_1+\cdots + i_n).$$
\begin{enumerate}
\item[$\bullet$] If $f \colon [r+s+1] \lra \overline{2}$ is defined by the word $f=\underbrace{1 \cdots 1}_{r+1} \underbrace{2 \cdots 2}_{s+1}$, then $\crf \colon \mcalo^r \times \mcalo^s \lra \mcalo^{r+s+1}$ is defined by the formula
\begin{eqnarray} \label{crf1_eq}
\crf(x, y)= \mu(x, d^0y).
\end{eqnarray}
\item[$\bullet$] If $f \colon [2n+i_1+\cdots +i_n] \lra \overline{n+1}$ is on the form $f=1\underbrace{2 \cdots 2}_{i_1+1} 1\underbrace{3 \cdots 3}_{i_2+1} 1 \cdots 1\underbrace{n+1 \cdots n+1}_{i_n+1}1,$
then $\crf \colon \mcalo^n \times \mcalo^{i_1} \times \cdots \times \mcalo^{i_n} \lra \mcalo^{2n+i_1+\cdots i_n}$ is defined by the formula
\begin{eqnarray} \label{crf2_eq}
\crf(x, y_1, \cdots , y_n)= \gamma(x, d^0d^{i_1+1}y_1, \cdots, d^0d^{i_n+1}y_n).
\end{eqnarray}
\item[$\bullet$] For a general $f \colon T \lra \overline{n}$ of complexity less than or equal to $2$, the map $\crf \colon \mcalo^{f^{-1}(1)} \times \cdots \times \mcalo^{f^{-1}(1)} \lra \mcalo^T$ is defined (as a "combination" of formulas (\ref{crf1_eq}) and (\ref{crf2_eq})) by induction on $\left\|T\right\|= \left|T\right|-1$. We refer the reader to \cite[Section 10]{mcc_smith04} for that induction.   
\end{enumerate}

\section{Proof of Theorem~\ref{main_thm}} \label{main_result_section}

The goal of this section is to prove Theorem~\ref{main_thm} announced in the introduction.

Recalling the definition of the  cacti operad $MS$ from Section~\ref{cacti_section}, we have the following theorem in which the second part is a more precise formulation of Theorem~\ref{main_thm}.

\begin{thm} \label{compatibilty_thm}
\begin{itemize}
\item[(a)] There exists an isomorphism $q\colon MS \stackrel{\cong}{\longrightarrow} \mathcal{D}_2$.
 \item[(b)] Let $\opt$ be a cosimplicial space defined by a multiplicative operad $\mathcal{O}$. Then, for each $n \geq 0$, the square
 $$\xymatrix{MS(n) \times (\emph{Tot} \mathcal{O}^{\bullet})^n \ar[rr]^-{\theta_n} \ar[d]_{q_n \times id^n}&& \emph{Tot} \mathcal{O}^{\bullet} \ar[d]_{id} \\ 
\mathcal{D}_2(n) \times (\emph{Tot} \mathcal{O}^{\bullet})^n \ar[rr]^-{\beta_n}&& \emph{Tot} \mathcal{O}^{\bullet}}$$
commutes.
\end{itemize}
\end{thm}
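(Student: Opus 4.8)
\textbf{Proof plan for Theorem~\ref{compatibilty_thm}.}

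The plan is to establish (a) first, essentially by recalling Salvatore's isomorphism \cite[Proposition 8.2]{sal10} between $MS$ and $\dtwo$, and then to identify the explicit combinatorial form of $q_n$ at the level of cells: an element $(x,\fitilde)\in MS(n)=\ims_2(n)\times\mbox{Mon}(I,\partial I)$, together with a point $t\in\Delta^k$, produces (via the ``suitable cutting'' of Section~\ref{cacti_section}) a diagram $[k]\stackrel{h}{\longleftarrow}[m]\stackrel{f}{\longrightarrow}\overline{n}$ in $A_{n[k]}$ and a point $\widetilde{\alpha}_k(t)\in\Delta^{f^{-1}(1)}\times\cdots\times\Delta^{f^{-1}(n)}$; I would check that this assignment is exactly the natural transformation $q_n(x,\fitilde)=\alpha\in\dtwo(n)$ under Salvatore's isomorphism. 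The key observation making this work is that the word $f$ and the morphism $h$ constructed in the proof of Theorem~\ref{action_cacti_operad} are precisely the combinatorial data indexing the relevant object of $A_{n[k]}$, and the tuple $(y_1,\dots,y_n)$ defined by \eqref{yi_definition} is precisely $\widetilde{\alpha}_k(t)$, because $\widetilde{g}=\alpha\circ g$ is built to rescale each $1$-manifold $I_i$ onto $[-1,1]$ in the order dictated by $f$.

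For part (b), the strategy is to chase both composites around the square on a typical element $((x,\fitilde),(a_1^{\bullet},\dots,a_n^{\bullet}))$ evaluated at $t\in\Delta^k$, and compare. Going down-then-right gives $\beta_n(q_n(x,\fitilde),(a_1^{\bullet},\dots,a_n^{\bullet}))_k(t)=h_*\circ\crf\circ(a_1^{f^{-1}(1)},\dots,a_n^{f^{-1}(n)})(\widetilde{\alpha}_k(t))$ by \eqref{beta_definition}, while going right gives $\theta_n((x,\fitilde),(a_1^{\bullet},\dots,a_n^{\bullet}))_k(t)$, defined by the inductive formulas \eqref{theta2_defn1}, \eqref{theta2_defn2}, \eqref{thetan_formula}, \eqref{thetan_formula2}. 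After the identification $\widetilde{\alpha}_k(t)=(y_1,\dots,y_n)$ from part (a), it suffices to prove the single identity
$$
h_*\circ\crf\,(a_1^{f^{-1}(1)}(y_1),\dots,a_n^{f^{-1}(n)}(y_n))=\theta_n(f)
$$
for every surjective $f\colon[m]\lra\overline{n}$ of complexity $\leq 2$ and every $h\colon[m]\lra[k]$ in $\Delta$ arising from our cutting. I would prove this by induction on $n$ (or on $\|T\|=|T|-1$, matching McClure--Smith's own induction for $\crf$), using the base cases $n=1$ (both sides are $a_1^k(t)$, after noting $h_*$ collapses nothing since $m=k$ when $p=1$) and $n=2$, where one checks that $\crf$ on the word $1\cdots12\cdots2$ is $\mu(x,d^0y)$ by \eqref{crf1_eq} and that applying $h_*$ (which reinserts, via repeated $d^i$, the simplicial coordinates that were ``merged'' in the cutting) together with the naturality of the $a_i^{\bullet}$ converts $\mu(a_i^{?}(d^0y_j),\dots)$ into $\mu(a_i^r(y_i),a_j^s(y_j))$ as in \eqref{theta2_defn1}, and similarly the word $1\cdots12\cdots21\cdots1$ yields \eqref{theta2_defn2} via the two-argument case of \eqref{crf2_eq}. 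The inductive step splits according to whether the last letter of $f$ equals $f(0)=i_0$: if it does not, $f$ decomposes as a concatenation and $\crf$ is, by McClure--Smith's induction, a $\mu$ of two smaller $\langle-\rangle$-operations, matching \eqref{thetan_formula2}; if it does, $\crf$ is built from the $n$-ary $\gamma$-formula \eqref{crf2_eq} applied to $x$ and the smaller blocks, and after pushing through $h_*$ this becomes the iterated $\circ_v$-composition of \eqref{thetan_formula}.

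The main obstacle I expect is bookkeeping the role of $h_*$ correctly: the combinatorial formulas for $\theta_n$ in Section~\ref{cacti_section} already incorporate, implicitly and via the naturality of the $a_i^{\bullet}$, the degeneracy/face adjustments that on the McClure--Smith side are carried separately by $h$ and by the extra $d^0, d^{i_j+1}$'s appearing in \eqref{crf1_eq}--\eqref{crf2_eq}. Concretely, in the cutting of Section~\ref{cacti_section} the simplex coordinates $y_i$ are the ``honest'' coordinates on each lobe, whereas $\crf$ wants its inputs pre-composed with $d^0$ (and more), and $h$ then re-merges coordinates that were duplicated when intervals $L_l$ landed in the same $J_j$; reconciling these requires the cosimplicial relations and the identity $\mu(\mu,id)=\mu(id,\mu)$, exactly as in case (b) of the proof of Theorem~\ref{action_cacti_operad}. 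I would isolate this as a lemma: for the word $f$ and map $h$ produced by the cutting, $h_*\circ\crf$ applied to $(a_i^{f^{-1}(i)}(y_i))_i$ equals the ``naturality-normalized'' expression in which each $a_i$ is evaluated at the reduced simplex $y_i$ with the correct target dimension; once this lemma is in place, the induction above closes routinely and the square commutes, proving Theorem~\ref{main_thm} as the special case recorded in part (b).
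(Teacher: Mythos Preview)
Your proposal is correct and follows the same route as the paper: both construct $q_n(x,\fitilde)$ by sending $t\in\Delta^k$ to the class of $(y_1,\dots,y_n)$ indexed by the object $(h,[m],f)\in A_{n[k]}$ coming from the cutting of Section~\ref{cacti_section}, and both reduce part (b) to the identity $h_*\circ\crf(a_1^{f^{-1}(1)}(y_1),\dots,a_n^{f^{-1}(n)}(y_n))=\theta_n(f)$, checked on the two generating shapes of $f$ that McClure--Smith use to build $\crf$ inductively (the paper verifies only these and declares ``it suffices'', while you sketch the full matching induction --- same content).

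One small correction to your bookkeeping: since $m=k+p-1\geq k$ and $h\colon[m]\to[k]$ is surjective, $h_*\colon\mathcal{O}^m\to\mathcal{O}^k$ is a composite of \emph{codegeneracies} $s^j$ (each applying $-\circ_j e$), not cofaces $d^i$; its role is precisely to cancel the extra $d^0$ and $d^{i_j+1}$ that formulas \eqref{crf1_eq}--\eqref{crf2_eq} insert into $\crf$, exactly as in the paper's two sample computations.
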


\begin{proof} 
\textbf{Proof of (a)}. This part was proved in \cite{sal10} by P. Salvatore. We will still recall the explicit construction  of $q \colon MS \lra \dtwo$ since we need it to prove part (b). To see that $q$ is an isomorphism of operads, we will refer the reader to \cite[Proposition 8.2]{sal10}.

For each $n \geq 0$, we will construct an isomorphism  $q_n \colon MS(n) \longrightarrow \mathcal{D}_2(n)$ in such a way that the collection $q=\{q_n\}_{n \geq 0} \colon MS \lra \dtwo$ turns out to be a morphism of operads. So let $n \geq 0$ be an integer.\\
If $n=0$ then $q_0 \colon *=MS(0) \longrightarrow \mathcal{D}_2(0)=*$ is the unique map from  the one point space to itself.\\
If $n=1$ then, by (\ref{ms(n)_definition}), we have 
$$MS(1)=\{\varphi \colon S^1 \longrightarrow S^1 \ \mbox{such that}\  \varphi \; \mbox{is weakly monotone}\ \mbox{and}\ \varphi(*)=*\}.$$
 We also have (by Definition~\ref{dtwo_definition}) 
$$\mathcal{D}_2(1)=\mbox{Nat}(\Delta^{\bullet}, \Xi_1^2(\Delta^{\bullet}))=\mbox{Nat}(\Delta^{\bullet}, \Delta^{\bullet})=\mbox{Tot} \Delta^{\bullet}.$$ 
Let $\varphi \in MS(1)$, and let $t=(-1 \leq t_1 \leq \cdots \leq t_k \leq 1) \in \Delta^k$. Define $q_1(\varphi) \colon \Delta^k \lra \Delta^k$ by 
$$q_1(\varphi)(t)=(-1 \leq \varphi(t_1) \leq  \cdots \leq \varphi(t_k) \leq 1).$$
Now we assume that $n \geq 2$. Let $(x, \fitilde) \in MS(n)= \mathcal{I}_2(n) \times \mbox{Mon}(I, \partial I)$. Set $x=(I_1, \cdots, I_n)$. Our goal is to construct $$q_n(x, \fitilde) \in \mathcal{D}_2(n)=\mbox{Nat}(\Delta^{\bullet}, \Xi_k^2(\Delta^{\bullet},\cdots, \Delta^{\bullet})).$$
Let $[k] \in \Delta$. We want to build a map 
$$G_x \colon \Delta^k \lra \Xi_n^2(\Delta^{\bullet}, \cdots, \Delta^{\bullet})([k]).$$ 
So let $t=(-1 \leq t_1 \leq \cdots \leq t_k \leq 1) \in \Delta^k$. Consider families $\{J_i\}_{i=0}^m$ and $\{L_l\}_{l=0}^{m}$ of closed intervals as defined in the beginning of the proof of Theorem~\ref{action_cacti_operad}. Let us take back  the diagram (see (\ref{htf}))  
$$\xymatrix{[k] & [m] \ar[l]_-{h} \ar[r]^-{f} & \overline{n}}.$$  
Clearly we have $\mbox{cplx}(f) \leq 2$ (this is because $\mbox{cplx}(x) \leq 2$ by Definition~\ref{cplxofx_defn}), and  $h$ is a morphism in the category $\Delta_+$. Hence, the triple $(h, [m], f)$ is an object in the category $A_{n[k]}$. Recalling  that (by (\ref{xi2_definition}) above) 
 $$\Xi_n^2(\Delta^{\bullet}, \cdots, \Delta^{\bullet})([k])= \mbox{colim}_{A_{n[k]}} F_{n[k]},$$
we are going now to built an explicit  element $G_x([k])(t)$ of the space 
$$F_{n[k]}(h, [m], f)=\prod_{i=1}^n \Delta^{\left|f^{-1}(i) \right|-1}.$$
Let $i \in \overline{n}$. As in the proof of Theorem~\ref{action_cacti_operad}, we construct an element $y_i \in \Delta^{\left|f^{-1}(i)\right|-1}$ (see (\ref{yi_definition}) for the definition of $y_i$). We thus have an element
$$y= (y_1, \cdots, y_n) \in \prod_{i=1}^n \Delta^{\left|f^{-1}(i) \right|-1}.$$ 
Define now $G_x([k])(t) \in \mbox{colim}_{A_{n[k]}} F_{n[k]}$ to be the equivalence class of $y$. That is, 
$$G_x([k])(t)=[(y_1, \cdots, y_n)].$$
It is straightforward to check that the family $G_x=\{G_x([k])\}_{k \geq 0}$ is a natural transformation. The map $q_n  \colon MS(n) \lra \dtwo(n)$ is then defined by
$$q_n(x, \fitilde)= G_x.$$
It is also straightforward to check that the map $q=\{q_n\}_{n \geq 0} \colon MS \lra \dtwo$ respects the operad structure. 

\textbf{Proof of (b)}. The result follows immediately when $n=0$.\\ 
Let $n \geq 1$, $a_1^{\bullet}, \cdots, a_n^{\bullet} \in \mbox{Tot} \mathcal{O}^{\bullet}$ and $(x, \widetilde{f}) \in MS(n)$. We want to show that 
$$\beta_n(G_{x}, (a_1^{\bullet}, \cdots, a_n^{\bullet}))= \theta_n((x, \fitilde), (a_1^{\bullet}, \cdots, a_n^{\bullet})) \in \mbox{Tot} \opt.$$ 
To do that, we will prove the following equality (for each $k \geq 0$)
$$\beta_n(G_x, (a_1^{\bullet}, \cdots, a_n^{\bullet}))_k= \theta_n((x, \fitilde), (a_1^{\bullet}, \cdots, a_n^{\bullet}))_k \colon \Delta^k \lra \mcalo^k.$$
Let $[k] \in \Delta$. If $k=0$ then the desired equality follows.\\
 Now take $k \geq 1$, and let $t=(t_1, \cdots, t_k) \in \Delta^k$. We have 
$$\begin{array}{lll}
\beta_n(G_x, (a_1^{\bullet}, \cdots, a_n^{\bullet}))_k(t) & = & h_* \circ \crf \circ (a_1^{f^{-1}(1)}, \cdots, a_n^{f^{-1}(n)}) (y)\ \mbox{by (\ref{beta_definition})}\\
  & = &  h_* \circ \crf(a_1^{f^{-1}(1)}(y_1), \cdots, a_n^{f^{-1}(n)}(y_n)).
\end{array}
$$
To end the proof of this part, it suffices to get the following equality 
\begin{eqnarray} \label{crf=theta}
h_* \circ \crf(a_1^{f^{-1}(1)}(y_1), \cdots, a_n^{f^{-1}(n)}(y_n)) = \theta_n((x, \fitilde), (a_1^{\bullet}, \cdots, a_n^{\bullet}))_k(t)=\theta_n(f)
\end{eqnarray}
when  
$$f=\underbrace{1 \cdots 1}_{r+1} \underbrace{2 \cdots 2}_{s+1} \qquad \mbox{and} \qquad f=1\underbrace{2 \cdots 2}_{i_1+1} 1\underbrace{3 \cdots 3}_{i_2+1} 1 \cdots 1\underbrace{n+1 \cdots n+1}_{i_n+1}1.$$

\begin{enumerate}
\item[$\bullet$] If $f=\underbrace{1 \cdots 1}_{r+1} \underbrace{2 \cdots 2}_{s+1}$ then the map $h$ in the diagram $\xymatrix{[r+s] & [r+s+1] \ar[l]_-h \ar[r]^-{f} & \overline{2}}$ is equal to the codegeneracy map $s^{r+1}$. Recalling that $id \in \mcalo(1)$ is the identity operation, we first have
$$ \begin{array}{lll} 
\crf(a_1^r(y_1), a_2^s(y_2)) & = & \mu(a_1^r(y_1), d^0a_2^s(y_2)) \; \; \mbox{by (\ref{crf1_eq})}\\
                            & = & \mu(a_1^r(y_1), \mu(id, a_2^s(y_2))) \in \mcalo^{r+s+1} \; \; \mbox{by the definition of the coface map $d^0$}.
		\end{array}												
$$
Next, recalling that $e \in \mcalo(0)$ is the distinguish operation in arity $0$, we have 
$$ \begin{array}{lll}
                 h_*(\crf(a_1^r(y_1), a_2^s(y_2))) & = & s^{r+1}(\crf(a_1^r(y_1), a_2^s(y_2))) \; \; \mbox{since $h_*=s^{r+1}$} \\
																									& = & (\mu(a_1^r(y_1), \mu(id, a_2^s(y_2)))) \circ_{r+1} e \; \; \mbox{by the definition of $s^{r+1}$}\\
																									& = & \mu(a_1^r(y_1),  a_2^s(y_2)) \\
																									& = & \theta_2(f) \in \mcalo^{r+s} \;\; \mbox{by (\ref{theta2_defn1})},
   \end{array}
$$
thus giving the equality (\ref{crf=theta}).
 
\item[$\bullet$] Now we assume that $f$ is on the form $f=1\underbrace{2 \cdots 2}_{i_1+1} 1\underbrace{3 \cdots 3}_{i_2+1} 1 \cdots 1\underbrace{n+1 \cdots n+1}_{i_n+1}1$. We will prove the equality (\ref{crf=theta}) when $n=3$ and $f=1\underbrace{2 \cdots 2}_{3+1} 1\underbrace{3 \cdots 3}_{5+1} 1\underbrace{4 \cdots 4}_{1+1}1$ for example (the proof being the same for general $n$ and $f$).\\ By the formula (\ref{crf2_eq}), we have
$$\crf(a_1^3(y_1), a_2^3(y_2), a_3^5(y_3), a_4^1(y_4))=\gamma(a_1^3(y_1), d^0d^4a_2^3(y_2), d^0d^6a_3^5(y_3), d^0d^2a_4^1(y_4)) \in \mcalo^{15}$$
Since $f$ is on the above form, it follows that the map $h$ in the diagram $\xymatrix{[9] & [15] \ar[l]_-{h} \ar[r]^-{f} & \overline{4}}$ is defined by the word $h=0012333456788899$. It is not difficult to see that $h=s^{15} \circ s^{13} \circ s^{12} \circ s^6 \circ s^5 \circ s^1$. Therefore 
$$ \begin{array}{lll} 
h_*(\crf(a_1^3(y_1), a_2^3(y_2), a_3^5(y_3), a_4^1(y_4))) & = & \gamma(a_1^3(y_1), a_2^3(y_2), a_3^5(y_3), a_4^1(y_4))\\
                                                          & = & ((a_1^3(y_1) \circ_3 a_4^1(y_4)) \circ_2 a_3^5(y_3)) \circ_1 a_2^3(y_2) \\
																													& = & \theta_4(f) \in \mcalo^9 \; \; \mbox{by  (\ref{thetan_formula})},
\end{array}
$$
\end{enumerate}
thus completing the proof.
\end{proof}

\textsf{Université catholique de Louvain, Chemin du Cyclotron 2, B-1348 Louvain-la-Neuve, Belgique\\
Institut de Recherche en Mathématique et Physique\\}
\textit{E-mail address: arnaud.songhafouo@uclouvain.be}


\begin{thebibliography}{99}
\bibitem{mcc_smith04} JE.McClure and JH. Smith,\textit{ Cosimplicial objects and little n-cubes I}, Amer. J . Math., 126 (2004), no. 5,  1109--1153.
\bibitem{sal10} P. Salvatore, \textit{The Topological cyclic Deligne conjecture}, Algebr. Geom. Topol. 9 (2009), no. 1, 237--264.
\end{thebibliography}
\end{document}